\newcommand{\CC}{\mathbb{C}}
\newcommand{\R}{\mathbb{R}}
\newcommand{\E}{\mathbb{E}}
\newcommand{\N}{\mathbb{N}}
\newcommand{\1}{\mathbbm{1}}
\renewcommand{\P}{\mathbb P}
\newcommand{\al}{\alpha}
\newcommand{\be}{\beta}
\newcommand{\la}{\lambda}
\newcommand{\ga}{\gamma}
\newcommand{\ep}{\varepsilon}
\newcommand{\de}{\delta}
\newcommand{\te}{\theta}
\newcommand{\vt}{\vartheta}
\newcommand{\om}{\omega}
\newcommand{\vp}{\varphi}
\newcommand{\f}{\mathcal F}
\newcommand{\proba}{(\Omega ,\mathcal{F},(\f_t)_{t\geq0},\P)}
\newcommand{\bee}{\begin{equation}}
\newcommand{\eee}{\end{equation}}
\newcommand{\beea}{\begin{array}}
\newcommand{\eeea}{\end{array}}
\renewcommand{\theequation}{\arabic{section}.\arabic{equation}}
\theoremstyle{plain}
\newtheorem{prop}{Proposition}[section]
\newtheorem{cor}[prop]{Corollary}
\newtheorem{theo}[prop]{Theorem}
\newtheorem{lem}[prop]{Lemma}
\theoremstyle{definition} 
\newtheorem{ex}[prop]{Example}
\newtheorem{rem}[prop]{Remark}
\begin{document}

\title{Semiparametric estimation of McKean-Vlasov SDEs\thanks{The authors
gratefully acknowledge financial support of ERC Consolidator Grant 815703
``STAMFORD: Statistical Methods for High Dimensional Diffusions''.}} 
\author{Denis Belomestny\thanks{Faculty
of Mathematics, University of Duisburg-Essen, 
E-mail: denis.belomestny@uni-due.de.}  \and
Vytaut\.e Pilipauskait\.e\thanks{Department
of Mathematics, University of Luxembourg, 
E-mail: vytaute.pilipauskaite@uni.lu.} \and
Mark Podolskij\thanks{Department
of Mathematics, University of Luxembourg,
E-mail: mark.podolskij@uni.lu.}}

\maketitle

\begin{abstract}
In this paper we study the problem of semiparametric estimation for a class of McKean-Vlasov stochastic differential equations. Our aim is to estimate the drift coefficient of  a MV-SDE based on observations of the corresponding particle system. We propose a semiparametric estimation procedure and derive the rates of convergence for the resulting estimator.  We further prove that the obtained rates are essentially optimal in the minimax sense. 

\bigskip
{\it Key words}: deconvolution, McKean-Vlasov SDEs, mean-field models, minimax bounds, semiparametric estimation.\

\bigskip

{\it AMS 2010 subject classifications.} 62G20, 62M05, 60G07, 60H10.

\end{abstract}

\section{Introduction} \label{sec1}
\setcounter{equation}{0}
\renewcommand{\theequation}{\thesection.\arabic{equation}}

In the past fifty years  diffusion processes found numerous applications in natural and social sciences, and a variety of statistical methods have been investigated in the setting of SDEs during the last few decades. Maximum likelihood estimation and Bayesian approach are the most well established parametric methods in the literature; we refer to the monograph \cite{kutoyants2013statistical}. 
When the likelihood function is not available in a closed form, quasi likelihood methods provide an alternative approach to parameter estimation, see \cite{chang2011approximate} and references therein. The most recent contributions to nonparametric inference for diffusions are \cite{nickl2017nonparametric,strauch2018adaptive}.
These belong to the most successful tools when analysing 
estimation problems in different observation schemes of a diffusion.

Many diffusion models in natural and applied sciences can be viewed as continuous-time processes with complex and nonlinear probabilistic structure. For example, in statistical mechanics nonlinear diffusion models and particle systems have long and successful history. In general, nonlinear Markov processes are stochastic processes whose transition functions may depend not only on the current state of the process but also on the
current distribution of the process. These processes were introduced by
McKean~\cite{mckean1966class} to model plasma dynamics. Later nonlinear Markov
processes were studied by a number of authors; we mention here the books of
Kolokoltsov~\cite{kolokoltsov2010nonlinear} and
Sznitman~\cite{sznitman1991topics}. These processes arise naturally in the
study of the limit behavior of a large number of weakly interacting particles and have a wide range of applications, including financial
mathematics, population dynamics, and neuroscience (see, e.g.,
\cite{frank2005nonlinear} and the references therein). In this context the mean field theory has been employed to bridge the interaction of particles at the microscopic scale and the mesoscopic features of the system. From a probabilistic point of view propagation of chaos, fluctuation analysis, and large deviations have been investigated for a variety of mean field models and nonlinear diffusions. 
\par
In recent years, one witnessed a growing interest in statistical problems for high dimensional diffusions
in general and McKean-Vlasov (MV) SDEs in particular. Statistical inference for high dimensional Ornstein-Uhlenbeck models have been investigated in \cite{CMP20,GM19}. 
A parametric problem of estimating the coefficients of a MV-SDE under a small noise assumption have been  studied in \cite{GL20,ren2019least}.
Our current work is mostly related  to  a recent paper  \cite{dellamaestra2021nonparametric}, where  based on observation of a trajectory of an interacting particle system over a fixed time horizon,  the authors study nonparametric estimation of the solution (density) of the associated nonlinear Fokker-Planck equation, together with the drift function. The underlying statistical problem turns out to be rather challenging and   \cite{dellamaestra2021nonparametric} contains only partial results. In particular, the problem of estimating  a distribution dependent drift function of a MV-SDE from the observations of the corresponding particle system at time \(T>0\) has not been yet studied in the  literature. 
\par
In this paper we  consider the case where the drift  has a semiparametric form consisting of a polynomial part, a trigonometric part and a nonparametric interaction function convolved with an unknown marginal distribution of  the underlying MV-SDE.  The goal of this research is twofold: first to propose a kernel type estimator for the drift function based on the empirical characteristic function of the particles; and, second, to  study its properties. We derive upper bounds on \(L^2\) risk of the proposed estimator and show that these bounds are essentially optimal in minimax sense for a properly chosen functional class of drift functions. In particular, we show that the convergence rates of our estimator are logarithmic under a polynomial tail behaviour of the non-parametric part of the interaction function. Our approach is based on a rigorous analysis of the related inverse problem for the underlying stationary Fokker-Planck equation and makes use of the probabilistic properties of the model obtained in \cite{cattiaux2008probabilistic,malrieu2003convergence}. To the best of our knowledge, this is the first work containing minimax optimal procedure  for semiparametric estimation  of the coefficients of MV-SDEs from discrete observations of the corresponding particle system and hence  fills an important gap in the current literature on  statistical inference for MV-SDEs. 
\par
The structure of the paper is as follows. In Section~\ref{sec2} we introduce the main setup and  recall some basic facts about  MV-SDEs and related particle systems. In Section~\ref{sec3} we formulate our main statistical problem and describe the estimation procedure. Section~\ref{sec4} is devoted to the convergence analysis of the proposed algorithm.  In particular, we derive upper bounds on \(L^2\) risk of the suggested drift estimator.  In Section~\ref{sec5} we complete our theoretical analysis  by providing   lower bounds for the nonparametric part of the model that essentially match upper bounds obtained  in Section~\ref{sec4}. 
Conclusions and outlook are presented in Section~\ref{secOut}. All proofs are collected in Section~\ref{sec6}.


\section{The particle system model and propagation of chaos} \label{sec2}
\setcounter{equation}{0}
\renewcommand{\theequation}{\thesection.\arabic{equation}}

Throughout the paper we consider a filtered probability space $\proba$, on which all stochastic processes are defined. We focus on an $N$-dimensional system of stochastic differential equations given by
\bee \label{eq:sde-part}
X_{t}^{i,N}=X_{0}^{i}+B_{t}^{i}-\frac{1}{2N}\sum_{j=1}^{N}\int_{0}^{t}\vp' (X_{s}^{i,N}-X_{s}^{j,N} )\,ds, \qquad 1 \le i \le N, \qquad t\geq0,
\eee
where $B^{i} = (B^{i}_t)_{t \ge 0}$, $1 \le i \le N$, are independent
standard Brownian motions and $X_{0}^{i}$, $1 \le i \le N$, are i.i.d.\ random variables with distribution $\mu_0(dx)$.
Here the \textit{interaction function} $\vp'$ denotes the derivative of the function $\vp \in C^2(\R)$, which satisfies the following assumption: 
\vspace{0.3 cm}
\begin{itemize}
\item[(A)] The function $\vp$ is even (i.e. $\vp(x)=\vp(-x)$ for all $x\in \R$), strictly convex: 
\bee \label{convex}
\vp''(x) \geq \la>0, \qquad \forall x \in \R,
\eee
and locally Lipschitz with polynomial growth, that is
\bee \label{Lipschitz}
|\vp'(x)-\vp'(y)|\leq |x-y||P(x)+P(y)|, \qquad \forall x,y \in \R,
\eee 
for a polynomial $P$. 
\end{itemize}
\vspace{0.3 cm}

\noindent
The particle system \eqref{eq:sde-part} has been originally studied in \cite{benachour1998nonlinear}. However, as pointed out in \cite{malrieu2003convergence}, the asymptotic properties of the system are rather ill-behaved in terms of uniformity and long term behaviour, and it is more appropriate to consider the projected particle system
\bee \label{Yprocess}
Y_{t}^{i,N}:=X_{t}^{i,N}-\frac{1}{N}\sum_{j=1}^{N}X_{t}^{j,N},\qquad 1\le i \le N, \qquad t \ge 0.
\eee
The \textit{mean field equation}, which determines the asymptotic behaviour of the process $Y^N$ at 
\eqref{Yprocess}, is given by the $1$-dimensional McKean-Vlasov equation
\bee \label{meanfield}
\overline{X}_{t}=\overline{X}_{0}+B_{t}-\frac{1}{2}\int_{0}^{t} (\vp'\star\mu_s )(\overline{X}_{s})\,ds,\qquad t\geq0,
\eee
where $\mu_t (dx)=\P(\overline{X}_t \in dx)$ and 
\[
( \varphi'\star\mu_t )(x)=\int_{\R} \varphi'(x-y)\mu_t (dy), \qquad x \in \R, \qquad t \ge0.
\]
Under Assumption (A) the measure $\mu_t$ possesses a smooth Lebesgue density, which solves the partial differential equation
\bee
\frac{\partial}{\partial t} \mu_t = \frac 12 \frac{\partial^2}{\partial x^2} \mu_t + \frac 12 
\frac{\partial}{\partial x} ( (\vp' \star \mu_t) \mu_t ), \qquad \mu_0 (dx)=\P(\overline{X}_{0}\in dx).
\eee 
The stochastic differential equation \eqref{meanfield} admits an invariant density $\pi$, which is described by an integral equation of convolution type:
\bee \label{pidef}
\pi(x) = Z^{-1}_{\pi} \exp\left(-(\vp \star \pi)(x) \right) \qquad \text{with} \quad 
Z_{\pi}=\int_{\R} \exp\left(-(\vp \star \pi)(x) \right) dx.   
\eee
In this article we will consider semiparametric estimation of the interaction function $\vp'$ and the identity \eqref{pidef} will be key for our approach. 

Next, we will demonstrate a propagation of chaos result for the particle system \eqref{Yprocess}. We recall that the \textit{Wasserstein $p$-distance} between two probability measures $\mu_1, \mu_2$ on $\R$ is defined by
\[
W_p(\mu_1, \mu_2):= \Big( \inf_{X_1\sim \mu_1, X_2\sim \mu_2} \E[|X_1-X_2|^p] \Big)^{1/p},
\]
where the infimum is taken over all couplings $(X_1,X_2)$ such that $X_i$ has the law $\mu_i$, $i=1,2$. The following theorem has been shown in \cite{cattiaux2008probabilistic,malrieu2003convergence}.

\begin{theo}[Theorems 5.1 and 6.2 in \cite{malrieu2003convergence}] \label{th1}
Let $\overline{X}^i$, $1\le i \le N$, be i.i.d.\ copies of the process $\overline{X}$ defined at 
\eqref{meanfield} so that every $\overline{X}^{i}$ is driven by the same Brownian motion as the $i$th particle of the system \eqref{eq:sde-part} and they are equal at time $0$. Denote by 
$$\Pi_{N,T}= N^{-1} \sum_{i=1}^N \de_{Y_{T}^{i,N}}$$ 
the empirical distribution of the projected particle system $Y_{T}^{i,N}$, $1 \le i \le N$, and by $\Pi$ the law associated to the invariant density $\pi$.  Under Assumption (A) there exist constants $C_1,C_2>0$ (independent of $N,T$) such that 
\bee
\sup_{t\geq 0} \E[|Y_{t}^{i,N} - \overline{X}^i_t|^2] \leq C_1 N^{-1}
\eee
and
\bee \label{wasser}
\E[W^2_1(\Pi_{N,T}, \Pi)] \leq C_1 N^{-1} + C_2 \exp(-\la T)=: N_T^{-1}
\eee
where the constant $\la>0$ has been introduced in \eqref{convex}.
\end{theo}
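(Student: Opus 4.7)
Since the result is a coupling/propagation-of-chaos statement in the style of Malrieu, my plan is to implement a synchronous coupling and then exploit the strict convexity of $\vp$ to produce a quantitative dissipation estimate. I start by deriving the SDE satisfied by the projected particle $Y^{i,N}_t$. Differentiating the definition \eqref{Yprocess} and using that $\vp'$ is odd (so that $\sum_{j,k}\vp'(X^{j,N}_t-X^{k,N}_t)=0$), the ``center of mass'' drift vanishes and one obtains
\begin{equation}
dY_t^{i,N}= dB_t^i-\frac{1}{N}\sum_{j=1}^N dB_t^j - \frac{1}{2N}\sum_{j=1}^N \vp'(Y_t^{i,N}-Y_t^{j,N})\,dt.
\end{equation}
For the reference system I take $\overline{X}^i$ driven by the same Brownian motion $B^i$ with the same initial value $X_0^i$, as in the statement, so that the processes agree at $t=0$ and have synchronized noise.

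Next I would set $\xi_t^i := Y_t^{i,N}-\overline{X}^i_t$ and compute $dA_t$ for $A_t := \frac{1}{N}\sum_{i=1}^N (\xi_t^i)^2$ by It\^o. The quadratic variation contribution comes only from the $-N^{-1}\sum_j dB_t^j$ term and produces a deterministic contribution of order $N^{-1}$. The drift contains two pieces: a propagation-of-chaos error
\begin{equation}
R_t^i := \frac{1}{2N}\sum_{j=1}^N \vp'(\overline{X}_t^i-\overline{X}_t^j)-\frac{1}{2}(\vp'\star\mu_t)(\overline{X}_t^i),
\end{equation}
whose $L^2$-norm is $O(N^{-1/2})$ by the i.i.d.\ structure of the $\overline{X}^j$'s combined with the polynomial growth bound \eqref{Lipschitz} and standard moment estimates on $\overline{X}^i_t$; and the ``good'' interaction term
\begin{equation}
-\frac{1}{2N^2}\sum_{i,j}(\xi_t^i-\xi_t^j)\bigl[\vp'(Y_t^{i,N}-Y_t^{j,N})-\vp'(\overline{X}_t^i-\overline{X}_t^j)\bigr].
\end{equation}
The latter, after a standard symmetrization $i\leftrightarrow j$, is bounded above by $-\tfrac{\la}{2N^2}\sum_{i,j}(\xi_t^i-\xi_t^j)^2$ thanks to \eqref{convex}. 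Since the $\xi_t^i$ have zero empirical mean (both $Y^{i,N}$ and $\overline{X}^i$ are centered in the appropriate sense once one subtracts the empirical mean), $\frac{1}{2N^2}\sum_{i,j}(\xi_t^i-\xi_t^j)^2 = A_t - (\overline{\xi}_t)^2$ and the correction is controlled analogously. Putting everything together yields a differential inequality
\begin{equation}
\frac{d}{dt}\E[A_t] \le -\la\,\E[A_t] + \frac{C}{N},
\end{equation}
and Gronwall gives the first bound $\sup_{t\ge0}\E[(\xi_t^i)^2]\le C_1 N^{-1}$ by exchangeability.

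For the Wasserstein estimate \eqref{wasser} I would use the triangle inequality
\begin{equation}
W_1(\Pi_{N,T},\Pi)\le W_1\bigl(\Pi_{N,T},\widehat\Pi_{N,T}\bigr)+W_1\bigl(\widehat\Pi_{N,T},\mu_T\bigr)+W_1(\mu_T,\Pi),
\end{equation}
where $\widehat\Pi_{N,T} := N^{-1}\sum_i\de_{\overline{X}^i_T}$. The first term is bounded by the coupling distance $\big(N^{-1}\sum_i|\xi_T^i|^2\big)^{1/2}$, so its squared expectation is $O(N^{-1})$ by the previous step. The second term, after squaring and taking expectation, is again $O(N^{-1})$: in dimension one this is the classical empirical $W_1$ rate under the moment assumption, which follows from polynomial growth of $\vp'$ and boundedness of $\E[(\overline{X}_T^i)^p]$ uniformly in $T$. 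The last term decays as $C\exp(-\la T)$ by exponential contraction of the nonlinear semigroup towards the invariant distribution $\Pi$, a consequence of the uniform convexity assumption \eqref{convex}.

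The main obstacle is the convexity-based dissipation step: one must carefully handle the centering built into the projection $Y^{i,N}$ versus the independent copies $\overline{X}^i$ (which are not centered in the same way), and show that the error produced by $R_t^i$ does not spoil the coercive $-\la\E[A_t]$ term. This is where the rigidity coming from $\vp'$ being odd and from the exchangeability of the particles becomes essential, and it is the point at which the factor $\la$ appearing in \eqref{wasser} is actually generated.
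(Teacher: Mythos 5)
The paper does not prove this theorem: it is imported verbatim from the cited references, as the bracketed attribution ``Theorems 5.1 and 6.2 in \cite{malrieu2003convergence}'' indicates, and no argument appears in Section~\ref{sec6}. Your sketch correctly reconstructs the synchronous-coupling proof from those references: derive the SDE for $Y^{i,N}$ (the centre-of-mass drift cancels because $\vp'$ is odd), couple with i.i.d.\ copies of the nonlinear diffusion driven by the same noise, symmetrize the interaction term, invoke \eqref{convex} to obtain a dissipative Gronwall inequality with a propagation-of-chaos remainder of order $N^{-1}$, and finally pass to \eqref{wasser} via the triangle inequality using the one-dimensional empirical $W_1$ rate and the exponential $W_2$-contraction to $\Pi$ furnished by uniform convexity. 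So the route is the intended one.

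One step in your dissipation argument is stated imprecisely, and it is exactly the subtle point you flag at the end. The $\xi_t^i$ do \emph{not} have zero empirical mean: $\sum_i Y_t^{i,N}=0$ by construction, so $\bar\xi_t = -N^{-1}\sum_i\overline{X}_t^i$, a genuinely random quantity. The identity $\tfrac{1}{2N^2}\sum_{i,j}(\xi_t^i-\xi_t^j)^2 = A_t-\bar\xi_t^2$ is just the formula for empirical variance and holds regardless of centering, so convexity only yields a $-\la(A_t-\bar\xi_t^2)$ drift; the correction $\bar\xi_t^2$ must be controlled separately. This works because $\E[\overline{X}_t]=\E[\overline{X}_0]$ (the nonlinear flow preserves the mean, again since $\vp'$ is odd), so after centering $\mu_0$ one has $\E[\bar\xi_t^2]=N^{-1}\operatorname{Var}(\overline{X}_t)=O(N^{-1})$ by the uniform-in-time second-moment bound coming from convexity. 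Without that centering the Gronwall inequality would carry an $O(1)$ forcing term and the uniform-in-time estimate would fail, which is precisely why Malrieu passes to the projected system $Y^{i,N}$ in the first place. Your final paragraph identifies the obstacle correctly, but the earlier phrase ``the $\xi_t^i$ have zero empirical mean'' should be replaced by the estimate $\E[\bar\xi_t^2]\lesssim N^{-1}$, which requires the centering of $\mu_0$ and the mean-preservation of the nonlinear dynamics.
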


\noindent
The estimate \eqref{wasser} states that the invariant distribution $\Pi$ of the mean field equation 
\eqref{meanfield} is well approximated by the empirical measure $\Pi_{N,T}$ and gives the error bound associated with this approximation. In the next section we will use this result in our estimation procedure.

\section{Statistical problem and the estimation method} \label{sec3}
\setcounter{equation}{0}
\renewcommand{\theequation}{\thesection.\arabic{equation}}

We assume that the data
\[
Y_{T}^{1,N}, \ldots, Y_{T}^{N,N}
\]
is observed and $N,T \to \infty$ (and, as a consequence, $N_T \to \infty$), and our goal is to estimate 
the interaction function $\vp'$ introduced in \eqref{eq:sde-part} in a semiparametric setting.  We remark that the sampling scheme is rather unusual as we observe the particle system only at the terminal time $T$. According to the identity  \eqref{pidef} and the statement \eqref{wasser}, the considered data suffices to identify the interaction function $\vp'$ via the mean field limit. 
\par
Due to the complexity of the integral equation  \eqref{pidef}, which we will heavily rely on in our estimation procedure, we can not treat fully general interaction functions $\vp'$. Instead we consider a semiparametric model of the form
\bee \label{sempar}
\vp (y) = \sum_{0 < j \le J} a_j \vp_j (y) + \be (y), \qquad y \in \R,
\eee
where
\bee
\vp_j (y) = y^{2j}, \ 0 < j \le J_1, \qquad \vp_j (y) = \cos (\te_j y),  \ J_1 < j  \le J, \qquad y \in \R,
\eee
for some known distinct frequencies 
$\te_{J_1+1} >0$, \dots, $\te_{J} >0$
and known positive integers $J_1 \le J$. 
The parameters $a_1>0$, $a_2 \ge 0$, \dots, $a_{J_1-1} \ge 0$, $a_{J_1}>0$, $a_{J_1+1} \in \R$, \dots, $a_J \in \R$
and the function $\be$ are unknown.
The nonparametric component $\be\in C^{2}(\R)$ is even, bounded and such that $\be'$ is bounded, $\| \be' \|_{L^1(\R)} := \int_{\R} |\be'(y)| dy <\infty$ (later we will also
assume that $\| \be'' \|_{L^2(\R)} := (\int_{\R} |\be''(y)|^2 dy)^{1/2} < \infty$).
The strict convexity condition 
\eqref{convex} is induced by the assumption
\bee
2a_1 - \sum_{J_1 < j \le J}  \theta_j^2 |a_j| + \be''(y) \geq \la>0, \qquad \forall y \in \R.
\eee
The presence of the polynomial term in \eqref{sempar}  gives upper and lower bounds for $\pi$, which are required in the proofs (and are difficult to obtain in the general setting). The presence of the trigonometric terms is for modelling purpose only and does not influence the estimation theory.

Our approach will be based upon the integral equation  \eqref{pidef}. We will first provide the representation of the invariant density $\pi$ in the setting \eqref{sempar}. In the following,
for any function $f\in L^1(\R)$, we denote by $\mathcal{F}(f)$ the Fourier transform of $f$. 

\begin{lem} \label{lem1} 
For $\vp$ given in \eqref{sempar}, we have
\bee
(\vp \star \pi) (y) = \al_0 + \sum_{0 < j \le J} \al_j \vp_j (y) + (\be \star \pi)(y), \qquad y \in \R,
\eee
with $\al_0 = \sum_{0< k \le J_1} m_{2k} a_k$ and
\bee\label{def:al}
\al_j = \sum_{j \le k \le J_1} \binom{2k}{2j} m_{2(k-j)} a_k, \ 0 <  j \le J_1, \qquad \al_j = a_j \mathcal{F}(\pi) (\te_j), \ J_1 < j \le J,
\eee
where $m_k=\int_{\R} y^k \pi(y) dy$ denotes the $k$th moment of the invariant measure $\Pi$.
\end{lem}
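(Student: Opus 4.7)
The proof is a direct computation, so the plan is to split $(\vp\star\pi)(y)$ by linearity and evaluate each piece; the only nontrivial preliminary is to record that the invariant density $\pi$ is an even function. This follows from Assumption~(A): the map $\pi\mapsto Z^{-1}\exp(-\vp\star\pi)$ appearing in \eqref{pidef} preserves even densities (since $\vp$ is even), and under strict convexity of $\vp$ the invariant measure of the mean-field limit \eqref{meanfield} is unique (cf.\ \cite{malrieu2003convergence}), so $\pi(x)=\pi(-x)$. As a consequence every odd moment $m_{2k+1}$ vanishes and $\mathcal{F}(\pi)$ is real-valued on the real line.

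For the polynomial terms $\vp_j(y)=y^{2j}$ with $0<j\le J_1$, I would apply the binomial theorem to $(y-z)^{2j}$, integrate against $\pi(z)\,dz$, kill the odd-$z$ contributions via evenness of $\pi$, and relabel $l=2(j-i)$ to obtain
\begin{equation*}
(\vp_j\star\pi)(y)=\sum_{0\le i\le j}\binom{2j}{2i}\,m_{2(j-i)}\,y^{2i}.
\end{equation*}
Weighting by $a_j$, summing over $1\le j\le J_1$ and interchanging the order of summation, the $i=0$ contribution collapses to the constant $\al_0=\sum_{0<k\le J_1}m_{2k}a_k$, while for each $0<i\le J_1$ the coefficient of $y^{2i}=\vp_i(y)$ is exactly $\sum_{i\le k\le J_1}\binom{2k}{2i}m_{2(k-i)}a_k=\al_i$, as asserted.

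For the trigonometric terms $\vp_j(y)=\cos(\te_j y)$ with $J_1<j\le J$, the cosine addition formula gives
\begin{equation*}
(\vp_j\star\pi)(y)=\cos(\te_j y)\int_{\R}\cos(\te_j z)\pi(z)\,dz+\sin(\te_j y)\int_{\R}\sin(\te_j z)\pi(z)\,dz,
\end{equation*}
and evenness of $\pi$ kills the sine integral while the cosine integral equals $\mathcal{F}(\pi)(\te_j)$; hence $a_j(\vp_j\star\pi)(y)=a_j\mathcal{F}(\pi)(\te_j)\vp_j(y)=\al_j\vp_j(y)$. Combining the polynomial sum, the trigonometric sum and the untouched term $(\be\star\pi)(y)$ yields the claimed decomposition. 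There is no real obstacle here: the essential structural ingredient is evenness of $\pi$ (which keeps both parametric blocks closed under convolution with $\pi$), and the remaining work is the bookkeeping of the double sum in the polynomial block.
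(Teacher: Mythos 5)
Your proof is correct and follows essentially the same route as the paper's: expand the polynomial convolutions via the binomial theorem, drop odd-order moments by symmetry of $\pi$, interchange the double sum, and use the cosine addition formula together with evenness of $\pi$ for the trigonometric block. The only addition is your brief justification that $\pi$ is even (the paper takes this for granted), which is a welcome clarification, though strictly speaking the reasoning should be phrased as: the reflection $x\mapsto\pi(-x)$ is also a fixed point of \eqref{pidef} because $\vp$ is even, and uniqueness of the invariant density forces $\pi$ to coincide with its reflection.
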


\begin{proof}
We consider the sum of $a_k (\vp_k \star \pi) (x)$ over $0<k\le J_1$, where
\bee 
(\vp_k \star \pi)(y) = \int (y-x)^{2k} \pi(x) dx =  \sum_{j=0}^k
\binom{2k}{2j} m_{2(k-j)} y^{2j}  
\eee
since $\pi$ is symmetric. Then interchanging the order of summation, we get the formula for the coefficients $\al_j$, $0 \le j \le J_1$. Similarly,  for $J_1<j \le J$, the coefficients $\al_j$ are obtained through the identity
\bee
\int \cos(\te_j (y-x)) \pi(x) dx = \cos(\te_j y) \int \cos(\te_j x) \pi(x) dx,
\eee
where we again have used the symmetry of $\pi$. This completes the proof of Lemma \ref{lem1}. 
\end{proof}

\noindent
We now proceed with the introduction of the estimation procedure, which consists of four steps:

\begin{itemize}
\item[(i)] Estimate the derivative of the log-density
\bee \label{lfunction}
l(y):= (\log \pi)'(y)= \frac{\pi'(y)}{\pi(y)}, \qquad y \in \R,
\eee
via a kernel-type estimator $l_{N,T}$ based on the observed data $Y_{T}^{1,N}, \ldots, Y_{T}^{N,N}$.
\item[(ii)] Estimate the parameter  $\boldsymbol{\al}:=(\al_1,\dots,\al_J)^\top \in \R^J$ using the minimum contrast method based on 
\bee 
l(y,\boldsymbol{\al}) = - \sum_{j=1}^J \al_j \vp'_j(y),
\eee 
which approximates $l(y) = l(y,\boldsymbol{\al}) - (\be'\star \pi)(y)$ for large values of $y \in \R$.
\item[(iii)]  Use the results of step (ii) to construct an estimator $\Psi_{N,T}$ of 
\bee
\Psi(y):= -(\be' \star \pi)(y), \qquad y \in \R.
\eee
\item[(iv)] Finally, apply the deconvolution 
\bee
\mathcal{F} (\be') (z)= - \frac{\mathcal{F} (\Psi )(z)}{\mathcal{F}(\pi)(z)} = - \frac{\mathcal{F} (\Psi )(z) \overline{\mathcal{F}(\pi)(z)}}{|\mathcal{F}(\pi)(z)|^2} , \qquad z \in \R,
\eee
and Fourier inversion to obtain an estimator $\be'_{N,T}$ of $\be'$.
\end{itemize}

\noindent
\textbf{Estimation of the function $l$.}
Following the latter we first introduce kernel estimators of $\pi$ and $\pi'$. For any function $f: \R\to \R$ and $u>0$ we use the standard notation
$f_u(x):= u^{-1} f(u^{-1} x)$. 
Let $K$ be a smooth kernel of order $m \ge 2$, that is  
\[
\int_{\R} K(x)dx=1, \qquad \int_{\R} x^{j}K(x)dx=0 \quad j=1,\ldots, m-1, \qquad  \int_{\R} x^{m}K(x)dx \not =0.
\]
Let $h_i = h^i_{N,T}$, $i=0,1$, be two bandwidth parameters vanishing as $N,T \to \infty$. We define 
\bee \label{kernelest}
\pi_{N,T} (y) := \frac{1}{N} \sum_{i=1}^{N} K_{h_0} \Big(y - Y_T^{i,N}\Big),
\quad
\pi'_{N,T} (y) := \frac{1}{N h_1} \sum_{i=1}^{N} K'_{h_1} \Big(y - Y_T^{i,N}\Big), \quad y \in \R.
\eee  
Next, we introduce a threshold $\de = \de_{N,T} \to 0$ as $N,T\to\infty$ and set
\bee \label{lnt}
l_{N,T}(y):= \frac{\pi'_{N,T} (y)}{\pi_{N,T} (y)} \1_{\{\pi_{N,T} (y)>\de \}}, \qquad y \in \R,
\eee
which is an estimator of the function $l$ given at \eqref{lfunction}. \\ \\
\textbf{Estimation of the parametric part.} Recall the identity  $l(y) = l(y,\boldsymbol{\al})-(\be'\star \pi)(y)$. Using $(\be'\star \pi)(y) \to0$ as $|y|\to \infty$ since $\be' \in L^1(\R)$, we will construct the minimum contrast estimator for $\boldsymbol{\al}$. More specifically, we introduce an integrable weight function $w$ with support $[\epsilon,1]$ ($\epsilon\in (0,1)$) and a parameter $U = U_{N,T}\to \infty$ as $N,T\to\infty$. For $\boldsymbol{\al} \in \R^J$, we define
\bee \label{alrho}
\boldsymbol{\al}_{N,T} := \arg \min_{\boldsymbol{\al}\in \R^J} \int (l_{N,T} (y) - l(y,\boldsymbol{\al}) )^2  w_U (y) dy.
\eee  
	We can use the relations \eqref{def:al} to estimate the coefficients $\boldsymbol{a} = (a_1,\dots,a_J)^\top$, based on the empirical moments $m_{2j; N,T}$, $1 < j < J_1$, 
	and the empirical Fourier moments $\f (\Pi_{N,T})(\te_j)$, $J_1 < j \le J$, of the particle system:
	\bee
	m_{k; N,T} := \frac{1}{N} \sum_{i=1}^N (Y^{i,N}_T)^k, \ k \in \N, \qquad 
	\mathcal{F}(\Pi_{N,T})(z):=\frac 1N \sum_{i=1}^N \exp (\mathrm{i} z Y_T^{i,N} ), \ z \in \R.
	\eee
	By solving the corresponding linear systems we so construct estimates $\boldsymbol{a}_{N,T}$ for the coefficients $\boldsymbol{a}$.
\\ \\
\textbf{Estimation of the nonparametric part.} Given the estimator $\boldsymbol{\al}_{N,T}$ introduced
in the previous step, we define
\bee
\Psi_{N,T}(y) = ( l_{N,T} (y) - l (y, \boldsymbol{\al}_{N,T}) ) \1_{\{|y|\leq \epsilon U\}}, \qquad y \in \R,
\eee
which provides an estimator of the function $\Psi = -\be' \star \pi$. In the next step we choose another threshold $\om = \om_{N,T} \to 0$ and introduce 
\bee
\mathcal{F}(\be'_{N,T}) (z):= -\frac{\mathcal{F}(\Psi_{N,T}) (z) \overline{\mathcal{F}(\Pi_{N,T})(z)} }{|\mathcal{F}(\Pi_{N,T}) (z)|^2} \1_{\{|\mathcal{F}(\Pi_{N,T})(z)|>\om \}}, \qquad z \in \R.
\eee
Finally, we use the Fourier inversion to estimate the function $\be'$:
\bee\label{def:h}
\be'_{N,T}(y):= \frac{1}{2\pi} \int \exp(-\mathrm{i} zy) \mathcal{F}(\be'_{N,T})(z) dz, \qquad y \in \R.
\eee
In the following we will derive asymptotic properties of all estimators introduced in this section.

\section{The asymptotic theory} \label{sec4}
\setcounter{equation}{0}
\renewcommand{\theequation}{\thesection.\arabic{equation}}

We start our asymptotic analysis with the estimator $l_{N,T}$. In the following the bandwidth and threshold parameters are chosen as
\bee \label{bandwidth}
h_0 = N_T^{-\frac{1}{2(m+1)}}, \qquad h_1 = N_T^{-\frac{1}{2(m+2)}}, \qquad \de = \de_0 \exp (- \bar \al_1 U^{2J_1} ),
\eee 
where $N_T$ is the rate introduced in \eqref{wasser}, $m$ is the order of the kernel $K$ and
$\de_0 := (2 Z_\pi )^{-1} \exp ( - \al_0 - \sum_{J_1 < j \le J} |\al_j| - \| \be \|_\infty)$,  
$\bar \al_1 :=\sum_{0 < j \le J_1} \al_j$.
Here and in what follows, $\| f \|_\infty := \sup_{y \in \R} |f(y)|$ for $f : \R \to \R$. Furthermore, we write $a_n \lesssim b_n$ when there exists a constant $C>0$, independent of $n$, such that $a_n\leq Cb_n$. Our first result is the following proposition.

\begin{prop} \label{prop1}
Let $\de$, $h_i$, $i=0,1$, be defined as in \eqref{bandwidth} and $U \ge 1$. Then
\bee
\sup_{|y|\leq U}  \E \left[ |l_{N,T}(y) - l(y)|^2 \right]^{\frac 1 2} \lesssim \exp ( \bar \al_1 U^{2J_1} ) \left(
N_T^{-\frac{m}{2(m+2)}} + U^{2J_1-1} N_T^{-\frac{m}{2(m+1)}} \right).
\eee
\end{prop}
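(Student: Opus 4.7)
The strategy is to split $l_{N,T}(y)-l(y)$ on the events $\{\pi_{N,T}(y)>\de\}$ and $\{\pi_{N,T}(y)\le\de\}$ and reduce everything to mean-square bounds on the kernel density estimator $\pi_{N,T}$ and its derivative estimator $\pi'_{N,T}$. Two deterministic facts drive the argument. First, combining \eqref{pidef} with Lemma~\ref{lem1} yields the explicit representation
\begin{equation*}
\pi(y) = Z_\pi^{-1} \exp\Big(-\al_0 - \sum_{0<j\le J_1}\al_j y^{2j} - \sum_{J_1<j\le J}\al_j\cos(\te_j y) - (\be\star\pi)(y)\Big).
\end{equation*}
Because the coefficients $\al_j$, $0<j\le J_1$, are non-negative (linear combinations of $a_k$, $k\le J_1$, with non-negative weights) and $\|\be\star\pi\|_\infty\le\|\be\|_\infty$, for $|y|\le U$ with $U\ge 1$ the choice of $\de_0$ gives $\pi(y)\ge 2\de$. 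Second, differentiating this identity, or equivalently using $l=-(\vp\star\pi)'$, yields $l(y) = -\sum_{0<j\le J}\al_j \vp_j'(y) - (\be'\star\pi)(y)$ and hence $|l(y)|\lesssim U^{2J_1-1}$ on $\{|y|\le U\}$.

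On $\{\pi_{N,T}(y)>\de\}$ the algebraic identity
\begin{equation*}
l_{N,T}(y)-l(y) = \frac{\pi'_{N,T}(y)-\pi'(y)}{\pi_{N,T}(y)} + l(y)\,\frac{\pi(y)-\pi_{N,T}(y)}{\pi_{N,T}(y)}
\end{equation*}
combined with $1/\pi_{N,T}(y)\le\de^{-1}$ yields
\begin{equation*}
\sqrt{\E[|l_{N,T}(y)-l(y)|^2\1_{\{\pi_{N,T}(y)>\de\}}]} \lesssim \de^{-1}\Big(\sqrt{\E[|\pi'_{N,T}(y)-\pi'(y)|^2]} + U^{2J_1-1}\sqrt{\E[|\pi_{N,T}(y)-\pi(y)|^2]}\Big).
\end{equation*}
On $\{\pi_{N,T}(y)\le\de\}$, the bound $\pi(y)\ge 2\de$ forces $|\pi_{N,T}(y)-\pi(y)|\ge\de$, so Chebyshev gives $\P(\pi_{N,T}(y)\le\de)\le\de^{-2}\E[|\pi_{N,T}(y)-\pi(y)|^2]$; combined with $|l(y)|^2\lesssim U^{2(2J_1-1)}$ this contributes at most the same order. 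Since $\de^{-1}=\de_0^{-1}\exp(\bar\al_1 U^{2J_1})$, the claim reduces to the two MSE estimates
\begin{equation*}
\sqrt{\E[|\pi_{N,T}(y)-\pi(y)|^2]}\lesssim N_T^{-m/(2(m+1))}, \qquad \sqrt{\E[|\pi'_{N,T}(y)-\pi'(y)|^2]}\lesssim N_T^{-m/(2(m+2))}.
\end{equation*}

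For the bias parts, the order-$m$ property of $K$ together with the smoothness of $\pi$ inherited from the representation above gives $|(K_{h_0}\star\pi)(y)-\pi(y)|\lesssim h_0^m$ and $|(K_{h_1}\star\pi)'(y)-\pi'(y)|\lesssim h_1^m$, matching the target rates at the chosen bandwidths. The variance parts are the main technical difficulty: a direct Kantorovich--Rubinstein bound against $(K_{h_0}\star\pi)(y)=\int K_{h_0}(y-x)\,d\Pi(x)$ using \eqref{wasser} and the Lipschitz constant $h_0^{-2}\|K'\|_\infty$ of $K_{h_0}(y-\cdot)$ only yields a variance of order $h_0^{-2}N_T^{-1/2}$, which is too large at $h_0 = N_T^{-1/(2(m+1))}$ to be dominated by the bias. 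The main obstacle is therefore to sharpen the variance estimate; the natural route is to use the coupling of Theorem~\ref{th1} to replace the particles by the i.i.d.\ sample $(\overline X^i_T)_{1\le i\le N}$, for which the standard $O(N^{-1}h_0^{-1})$ variance of a kernel density estimator (respectively $O(N^{-1}h_1^{-3})$ for the derivative) is indeed controlled by the bias at the chosen bandwidths. Plugging the resulting MSE bounds into the above display finishes the proof.
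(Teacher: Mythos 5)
Your decomposition on the event $\{\pi_{N,T}(y)>\delta\}$, the Chebyshev bound on the complement, the lower bound $\pi(y)\ge 2\delta$ for $|y|\le U$, the estimate $|l(y)|\lesssim U^{2J_1-1}$, and the reduction to MSE bounds on $\pi_{N,T}$ and $\pi'_{N,T}$ all track the paper's proof of Proposition~\ref{prop1}. The bias contributions $\lesssim h_0^m$, $\lesssim h_1^m$ are handled identically.

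The genuine gap is exactly where you flagged it, the variance, but your proposed fix does not close it. You correctly identify $\operatorname{Lip}(K_{h_0}(y-\cdot)) = h_0^{-2}\|K'\|_\infty$, so that Kantorovich--Rubinstein with \eqref{wasser} gives $\E[|(K_{h_0}\star(\Pi_{N,T}-\Pi))(y)|^2]\lesssim h_0^{-4}N_T^{-1}$; at $h_0 = N_T^{-1/(2(m+1))}$ this equals $N_T^{-(m-1)/(m+1)}$, which dominates the squared bias $h_0^{2m}=N_T^{-m/(m+1)}$, and the balance point would in fact be $h_0 \asymp N_T^{-1/(2(m+2))}$ with the slower rate $N_T^{-m/(m+2)}$. (The paper's own proof asserts $\operatorname{Lip}(K_{h_0}(y-\cdot)) = h_0^{-1}\operatorname{Lip}(K)$ and concludes a variance of $N_T^{-1}h_0^{-2}$; this Lipschitz constant appears to be off by one power of $h_0$, so the paper's argument as written shares the difficulty.) Your suggested coupling route does not obviously help either: writing $\pi_{N,T}(y)-(K_{h_0}\star\Pi)(y)$ as the coupling term $N^{-1}\sum_i\big(K_{h_0}(y-Y_T^{i,N})-K_{h_0}(y-\overline X_T^i)\big)$ plus a KDE deviation for the i.i.d.\ copies, the coupling term has second moment $\lesssim h_0^{-4}\max_i\E[|Y_T^{i,N}-\overline X_T^i|^2]\lesssim h_0^{-4}N^{-1}$ by Jensen's inequality and the same Lipschitz bound, which is the identical problematic order; moreover $\overline X_T^i\sim\mu_T$ rather than $\pi$, so a quantitative mixing estimate would also be required. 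To obtain the stated rates one needs a genuinely sharper concentration argument for the empirical measure of the particle system (exploiting independence or cancellation rather than a worst-case Lipschitz bound), or the bandwidths and rates in Proposition~\ref{prop1} must be revised accordingly.
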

\begin{proof} See Section  \ref{sec6}.
\end{proof}

\noindent
We observe that the upper bound in Proposition \ref{prop1} grows exponentially in $U$, which will strongly affect convergence rates for all parameters of the model.
We will now find the explicit expression for the estimator of $\boldsymbol{\al}$. 
For this purpose, in \eqref{lnt}  we write $l (y,\boldsymbol{\al}) = \boldsymbol{l}(y/U)^\top \cdot \boldsymbol{\al}^{U}$, where
\bee
\boldsymbol{l} (y) = -(\vp'_1 (y), \dots,\vp'_{J_1} (y), \vp'_{J_1+1}(Uy), \dots, \vp'_J(Uy))^\top, \qquad y \in \R,
\eee
and
\bee
\boldsymbol{\al}^{U} = (\al_1 U, \al_2U^3, \dots, \al_{J_1} U^{2J_1-1}, \al_{J_1+1}, \al_{J_1+2}, \dots, \al_J )^\top.
\eee
Then the unknown $\boldsymbol{\al}^{U}$ and analogously scaled estimator $\boldsymbol{\al}_{N,T}^{U}$ satisfy the identities
\bee
Q \boldsymbol{\al}_{N,T}^{U} = \int l_{N,T} (y) \boldsymbol{l} (y/U) w_U(y) dy, \qquad Q \boldsymbol{\al}^{U} = \int (l(y)+(\be' \star \pi)(y)) \boldsymbol{l} (y/U) w_U(y) dy,
\eee
where 
\bee \label{Qdef}
Q:= \int \boldsymbol{l} (y) \boldsymbol{l} (y)^\top w (y) d y \in \R^{J\times J}.
\eee
Notice that the components of the function $\boldsymbol{l}$ are linearly independent on any 
interval $[s,t]$, $s<t$, since
$U\te_{J_1+1},\dots, U\te_{J}$ are all distinct. 
In this case the matrix $Q$ is positive definite and hence invertible. 

In the next proposition we derive convergence rates for the estimates $\boldsymbol{\al}_{N,T}^{U}$
and $\Psi_{N,T}$. By $\| \cdot \|_2$ we denote the Euclidean norm.

\begin{prop}\label{prop2}
	Let $\de$, $h_i$, $i=0,1$, be defined as in \eqref{bandwidth} and $U \ge 1$.  Then
	\begin{align*}
	&\E \left[ \| \boldsymbol{\al}_{N,T}^{U}- \boldsymbol{\al}^{U} \|^2_2\right]^{\frac 1 2} \lesssim \exp (\bar \alpha_1 U^{2J_1}) 
	\left(N_T^{-\frac{m}{2(m+2)}} + U^{2J_1-1} N_T^{-\frac{m}{2(m+1)}} \right) \\ 
	&\qquad\qquad+ \frac{\exp (- \al_{J_1} (\epsilon U/2)^{2J_1})}{(\epsilon U/2)^{2J_1}} + U^{-1} \int_{|y|>\epsilon U/2} |\be' (y)| d y,\\
	& \E \left[ \int_{\R} | \Psi_{N,T} (y) - \Psi (y) |^2 d y \right]^{\frac 1 2}\lesssim \exp (\bar \al_1 U^{2J_1}) U^{\frac 1 2}    \left(N_T^{- \frac{m}{2(m+2)}} + U^{2J_1-1} N_T^{-\frac{m}{2(m+1)}} \right)\\ 
	&\qquad\qquad + \frac{\exp(-\al_{J_1} (\epsilon U/2)^{2J_1})}{(\epsilon U/2)^{2J_1-1}} + U^{-\frac{1}{2}} \int_{|y|>\epsilon U/2} |\be' (y)| d y+\left( \int_{|y|>\epsilon U/2} |\be'(y)|^2 dy \right)^{\frac{1}{2}}.
	\end{align*}
\end{prop}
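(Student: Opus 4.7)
The plan is to first control $\boldsymbol{\al}_{N,T}^U - \boldsymbol{\al}^U$ from the two identities displayed before the proposition, and then reduce the $\Psi_{N,T}$ bound to that plus a convolution tail estimate.

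First I would subtract the two identities for $Q\boldsymbol{\al}_{N,T}^U$ and $Q\boldsymbol{\al}^U$ to get $Q(\boldsymbol{\al}_{N,T}^U - \boldsymbol{\al}^U) = \int [(l_{N,T}(y) - l(y)) - (\be'\star\pi)(y)]\,\boldsymbol{l}(y/U) w_U(y)\,dy$. Since the components of $\boldsymbol{l}$ are linearly independent on $\mathrm{supp}(w) = [\epsilon,1]$, the matrix $Q$ in \eqref{Qdef} is positive definite and $\|Q^{-1}\|$ is bounded by a constant. The $L^2$ error thus splits into a stochastic and a deterministic bias part. For the stochastic part, Minkowski's integral inequality together with the substitution $z=y/U$ yields the bound $\sup_{|y|\le U} \E[|l_{N,T}(y)-l(y)|^2]^{1/2}\cdot \int \|\boldsymbol{l}(z)\|_2 w(z)\,dz$, and the latter integral is $O(1)$ since the components of $\boldsymbol{l}$ are bounded on $[\epsilon,1]$; Proposition \ref{prop1} then produces the first two terms of the stated bound. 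For the bias, Lemma \ref{lem1} combined with \eqref{pidef} gives the Gaussian-type tail estimate $\pi(y)\lesssim \exp(-\al_{J_1}|y|^{2J_1})$ for $|y|$ large, since the polynomial $\sum_j \al_j y^{2j}$ dominates with leading coefficient $\al_{J_1}>0$. Splitting $\be' = \be'\1_{|z|\le \epsilon U/2} + \be'\1_{|z|>\epsilon U/2}$, the inner convolution evaluated at $|y|\in[\epsilon U,U]$ uses only $\pi(y-z)$ with $|y-z|\ge \epsilon U/2$, producing an exponentially small contribution, while the outer part is controlled by $\|\pi\|_\infty \int_{|z|>\epsilon U/2}|\be'(z)|\,dz$. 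The $U^{-1}$ factor on the $L^1$-tail term and the extra polynomial gain in the denominator of the exponential come from the integration by parts $\be'\star\pi = (\be\star\pi)'$, which transfers one derivative onto the compactly supported weight $\boldsymbol{l}(y/U)w_U(y)$, followed by the change of variables $y=Uz$.

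For the second bound I would decompose $\Psi_{N,T}(y)-\Psi(y) = [(l_{N,T}(y)-l(y)) - \boldsymbol{l}(y/U)^\top(\boldsymbol{\al}_{N,T}^U-\boldsymbol{\al}^U)]\1_{|y|\le \epsilon U} - \Psi(y)\1_{|y|>\epsilon U}$ and compute its $L^2$-norm piece by piece: (i) $\E[\int_{|y|\le \epsilon U}|l_{N,T}-l|^2 dy]^{1/2} \le (2\epsilon U)^{1/2}\sup_{|y|\le U}\E[|l_{N,T}-l|^2]^{1/2}$, which gives the $U^{1/2}$ prefactor on the rates of Proposition \ref{prop1}; (ii) an estimation-error piece bounded by $(\int_{|y|\le\epsilon U}\|\boldsymbol{l}(y/U)\|_2^2 dy)^{1/2}\|\boldsymbol{\al}_{N,T}^U-\boldsymbol{\al}^U\|_2$, where the integral equals $U\int_{|z|\le\epsilon}\|\boldsymbol{l}(z)\|_2^2\, dz = O(U)$, so the first bound propagates with a $U^{1/2}$ prefactor; and (iii) the tail bias $\|\be'\star\pi\|_{L^2(|y|>\epsilon U)}$. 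For (iii), writing again $\be' = \be'_{in}+\be'_{out}$, Young's inequality $\|f\star g\|_2\le \|f\|_2\|g\|_1$ applied to $\be'_{out}\star\pi$ gives exactly $(\int_{|y|>\epsilon U/2}|\be'|^2)^{1/2}$, while for $\be'_{in}\star\pi$ restricted to $|y|>\epsilon U$ the kernel $\pi(y-z)$ is uniformly in its far tail and a Laplace-type integration in $y$ produces the exponentially small term with the claimed polynomial prefactor.

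The main obstacle is the bias estimate for $\boldsymbol{\al}$: naive bounds yield only $\exp(-\al_{J_1}(\epsilon U/2)^{2J_1}) + \int_{|y|>\epsilon U/2}|\be'(y)|\,dy$ without either the $U^{-1}$ gain or the polynomial improvement in the denominator. Extracting both requires the integration-by-parts step combined with a careful analysis of $(\be\star\pi)(y)$ near $|y|=\infty$: since $\be'\in L^1$ and $\be$ is even, $\be$ has a common finite limit $\be(\infty)$ at $\pm\infty$, and $|(\be\star\pi)(y)-\be(\infty)|$ admits a bound in terms of $\int_{|s|\ge |y|/2}|\be'(s)|\,ds$ plus tails of $\pi$; together with Laplace-type identities of the form $\int_M^\infty y^{2j-1}e^{-\al_{J_1}y^{2J_1}}\,dy \sim e^{-\al_{J_1}M^{2J_1}}/(2J_1\al_{J_1} M^{2(J_1-j)})$, this delivers the sharp form of the bias.
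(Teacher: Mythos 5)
Your overall skeleton is right: subtract the two identities for $Q\boldsymbol{\al}^U_{N,T}$ and $Q\boldsymbol{\al}^U$, invoke invertibility of $Q$, split into a stochastic term handled by Proposition~\ref{prop1} and a bias term involving $\be'\star\pi$; then, for $\Psi_{N,T}-\Psi$, split over $\{|y|\le \epsilon U\}$ (using the sup of $\E[|l_{N,T}-l|^2]$ and the bound already obtained on $\boldsymbol{\al}^U_{N,T}-\boldsymbol{\al}^U$, picking up $U^{1/2}$) and over $\{|y|>\epsilon U\}$ (a tail estimate for $\be'\star\pi$). This is exactly the paper's plan. Your treatment of part (iii) via Young/Minkowski plus a split of $\be'$ into inner and outer pieces is a valid alternative to the paper's Minkowski-plus-split-in-$x$, and gives the same two terms.

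The genuine gap is in the bias estimate for $\boldsymbol{\al}^U$, specifically the attribution of the $U^{-1}$ factor and the exponent $(\epsilon U/2)^{2J_1}$ to an integration by parts $\be'\star\pi=(\be\star\pi)'$ onto $\boldsymbol{l}(y/U)w_U(y)$. The paper does not integrate by parts, and the mechanism you propose does not work uniformly across components of $\boldsymbol{l}$. Recall $\boldsymbol{l}(y/U)$ has trigonometric entries of the form $\te_j\sin(\te_j y)$; differentiating these produces $\te_j^2\cos(\te_j y)$, which is still bounded, so
$\bigl\|\bigl(\te_j\sin(\te_j\,\cdot\,)\,U^{-1}w(\cdot/U)\bigr)'\bigr\|_{L^1}=O(1)$, not $O(U^{-1})$. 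Only the polynomial entries of $\boldsymbol{l}(\cdot/U)$ gain a $U^{-1}$ upon differentiation. Thus integration by parts fails to deliver the $U^{-1}$ gain in the Euclidean norm of the error vector.

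What the paper actually does is elementary: with $\tilde w(y):=\|Q^{-1}\boldsymbol{l}(y)\|_2|w(y)|$, which has $\|\tilde w\|_{L^1}$ and $\|\tilde w\|_\infty$ bounded uniformly in $U$, one has $\|\tilde w_U\|_\infty\le U^{-1}\|\tilde w\|_\infty$. Bounding the bias integral by $\|\tilde w_U\|_\infty\int_{\epsilon U}^\infty|(\be'\star\pi)(y)|\,dy$ already produces the $U^{-1}$. The inner integral is then handled via Fubini and a split at $\epsilon U/2$,
\[
\int_{\epsilon U}^\infty|(\be'\star\pi)(y)|\,dy\le \int_{\epsilon U/2}^\infty|\be'(y)|\,dy+\|\be'\|_{L^1(\R)}\int_{\epsilon U/2}^\infty\pi(x)\,dx,
\]
combined with the Laplace-type bound $\int_u^\infty\pi(x)\,dx\lesssim u^{1-2J_1}\exp(-\al_{J_1}u^{2J_1})$. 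Multiplying the whole expression by the $U^{-1}$ from $\|\tilde w_U\|_\infty$ yields $U^{-1}\int_{|y|>\epsilon U/2}|\be'|$ for the first term and $(\epsilon U/2)^{-2J_1}\exp(-\al_{J_1}(\epsilon U/2)^{2J_1})$ for the second (the extra power in the denominator is simply the absorbed $U^{-1}$). Note also that your pointwise bound ``$\|\pi\|_\infty\int_{|z|>\epsilon U/2}|\be'|$'' controls only the sup of $\be'\star\pi$ over $[\epsilon U,U]$, so naively integrating over a window of length $\sim U$ would cancel the $U^{-1}$; the Fubini swap is what avoids this loss. So the obstacle you identify does not exist, and the step you introduce to overcome it does not close the gap you perceive.
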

\begin{proof} See Section  \ref{sec6}.
\end{proof}

\noindent
We remark that the convergence rates for the parametric part of the model are logarithmic, which is rather unusual for parametric estimation problems.  
Indeed, the logarithmic rate is obtained under a proper choice of $U$ when there exists $q>0$ such that $0<\liminf_{y \to \infty} F_{q,\be'} (y) \le \limsup_{y \to \infty} F_{q,\be'} (y) <\infty$, where $F_{q,\be'}(y)=y^q \int_y^\infty |\be'(u)| du$, $y>0$.
We believe that the reason for such a slow convergence rate is a convolution type structure of the invariant density $\pi$.

\begin{rem} \label{rem1} \rm
The statement of Proposition \ref{prop2} can be transferred from $\boldsymbol{\al}$ to the original parameter $\boldsymbol{a}$ via the identities \eqref{def:al}. 
This follows from the estimate $\E [ |m_{2j; N,T} - m_{2j}|^{2k}]  \lesssim N_T^{-1}$ when $X_0^1$ has sufficiently high moments, see e.g.\ \cite{cattiaux2008probabilistic,malrieu2003convergence}. \qed
\end{rem}

\noindent
It is evident from Proposition \ref{prop2}  that the rate of convergence for $\Psi_{N,T} $ crucially depends on the tail behaviour of the function $\be'$.  
The next corollary, which is an immediate consequence of the previous result, gives the precise bound. 
 
\begin{cor}\label{cor1}
	 In the setting of Proposition \ref{prop2}  assume that there exists $p>0$ such that $\liminf_{y \to \infty} \Phi_{p,\be'} (y)>0$, where 
	 \bee\label{def:Phip}
	 \Phi_{p,\be'} (y):= y^{p-(1/2)} \int_y^\infty |\be'(u)| du + y^p \Big( \int_y^\infty |\be'(u)|^2 du \Big)^{1/2}, \qquad y>0.
	 \eee
	Choose $U = (c \log N_T)^{1/(2J_1)}$ for some $0< c < m / (2 (m+2) \bar \al_1)$. Then 
	\bee
	\E \left[ \int_{\R} | \Psi_{N,T} (y) - \Psi (y) |^2 d y \right] \lesssim (\log N_T)^{-p/J_1} \Phi^2_{p,\be'} ( (c \log N_T )^{1/(2J_1)} \epsilon/2 ).
	\eee
	Moreover, if $\limsup_{y \to \infty}\Phi_{p,\be'} (y)<\infty$, then
	\bee \label{psiconv}
	\E \left[ \int_{\R} |\Psi_{N,T} (y)-\Psi (y)|^2 d y \right]\lesssim (\log N_T)^{-p/J_1}.
	\eee
\end{cor}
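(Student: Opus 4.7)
The plan is to derive the corollary directly from Proposition~\ref{prop2} by plugging in the prescribed choice of $U$ and verifying that the dominant term is the tail part involving $\Phi_{p,\be'}$.

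First, I would square the $L^2$ bound given in Proposition~\ref{prop2} (since the corollary states the bound for $\E[\cdot]$ rather than $\E[\cdot]^{1/2}$), and examine each of the four resulting contributions after substituting $U = (c \log N_T)^{1/(2J_1)}$, which gives $U^{2J_1} = c \log N_T$ and hence $\exp(\bar\al_1 U^{2J_1}) = N_T^{c \bar\al_1}$. The stochastic error term becomes
\[
N_T^{c \bar\al_1}\, (\log N_T)^{1/(4J_1)} \bigl( N_T^{-m/(2(m+2))} + (\log N_T)^{(2J_1-1)/(2J_1)} N_T^{-m/(2(m+1))} \bigr),
\]
and since $c \bar\al_1 < m/(2(m+2))$ by assumption, this is $O(N_T^{-\eta})$ for some $\eta > 0$, hence negligible compared to any negative power of $\log N_T$. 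The purely deterministic bias term $\exp(-\al_{J_1}(\epsilon U/2)^{2J_1})/(\epsilon U/2)^{2J_1-1}$ equals $N_T^{-\al_{J_1}(\epsilon/2)^{2J_1} c}$ up to a polylog factor, so it is also $O(N_T^{-\eta'})$ for some $\eta' > 0$.

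The two remaining contributions, which are the tail integrals of $\be'$, would be controlled using the definition \eqref{def:Phip} of $\Phi_{p,\be'}$. Exploiting that $\be$ is even and hence $\be'$ is odd (so $\int_{|y|>a} |\be'(y)| dy = 2\int_a^\infty |\be'(y)| dy$, similarly for the $L^2$ tail), one gets
\[
U^{-1/2} \int_{|y|>\epsilon U/2} |\be'(y)|\, dy \lesssim U^{-1/2} (\epsilon U/2)^{-(p-1/2)} \Phi_{p,\be'}(\epsilon U/2) \lesssim U^{-p} \Phi_{p,\be'}(\epsilon U/2),
\]
and likewise $\bigl( \int_{|y|>\epsilon U/2}|\be'(y)|^2 dy \bigr)^{1/2} \lesssim U^{-p} \Phi_{p,\be'}(\epsilon U/2)$. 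Squaring and noting $U^{-2p} = (c\log N_T)^{-p/J_1}$, this yields the target rate $(\log N_T)^{-p/J_1} \Phi_{p,\be'}^2((c\log N_T)^{1/(2J_1)} \epsilon/2)$.

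To merge everything into the single bound stated, I would invoke the hypothesis $\liminf_{y\to\infty}\Phi_{p,\be'}(y) > 0$: it implies $\Phi_{p,\be'}(\epsilon U/2)$ is bounded below by a positive constant for $U$ large enough, so the polynomial-in-$N_T$ terms are absorbed into a constant multiple of $(\log N_T)^{-p/J_1}\Phi^2_{p,\be'}(\epsilon U/2)$. The second claim then follows immediately: if $\Phi_{p,\be'}$ is also bounded above, then $\Phi^2_{p,\be'}(\cdot)$ is $O(1)$ and one recovers the bare rate $(\log N_T)^{-p/J_1}$. There is no real obstacle here beyond careful bookkeeping; the crux was already absorbed into the choice $c < m/(2(m+2)\bar\al_1)$, which is precisely what guarantees that the exponential blow-up from $\exp(\bar\al_1 U^{2J_1})$ is beaten by the polynomial decay $N_T^{-m/(2(m+2))}$.
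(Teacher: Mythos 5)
Your proof is correct and supplies exactly the details that the paper omits when it labels Corollary~\ref{cor1} ``an immediate consequence'' of Proposition~\ref{prop2}; no proof is given in the paper, so there is nothing to compare against, but your argument is the natural one. You correctly square the Proposition~\ref{prop2} bound, verify that the choice $c\bar\al_1 < m/(2(m+2)) < m/(2(m+1))$ makes both stochastic terms decay polynomially in $N_T$, observe that the deterministic bias term $\exp(-\al_{J_1}(\epsilon U/2)^{2J_1})/(\epsilon U/2)^{2J_1-1}$ likewise decays polynomially, bound the tail integrals by $U^{-p}\Phi_{p,\be'}(\epsilon U/2)$ via the definition \eqref{def:Phip} (using evenness of $|\be'|$), and finally use the $\liminf$ hypothesis to absorb the polynomially small terms into a constant multiple of $(\log N_T)^{-p/J_1}\Phi^2_{p,\be'}(\epsilon U/2)$.
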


\noindent
We see that the convergence rate in  \eqref{psiconv} depends on the nonparametric part of the model through the parameter $p$ and on the highest degree polynomial in the parametric part through $J_1$ (in contrast, the trigonometric part of the model does not influence the convergence rate). 
This phenomenon will translate to the estimation of $\be'$.

The following proposition will play a crucial role for the analysis of the estimator $\be'_{N,T}$.

\begin{prop}\label{prop3}
	In the setting of Proposition \ref{prop2} assume $\be'' \in L^2(\R)$. Then
	\begin{align}
	\E\left[\int_{\R} | \be'_{N,T}(y)-\beta' (y) |^2 d y \right]  &\lesssim \omega^{-2}  \Big(  \E\left[\int_{\R} | \Psi_{N,T}(y)-\Psi(y) |^2 d y \right] + N_T^{-1} \Big)\nonumber\\ 
	&\qquad + N_T^{-1} \int_{|\mathcal{F}(\pi) (z)| > 2\om} |\mathcal{F}(\be') (z)|^2 |\mathcal{F}(\pi)(z)|^{-2} z^2 d z\\ 
	&\qquad+ \int_{|\mathcal{F}(\pi) (z)| \le 2 \om} |\mathcal{F}(\be') (z)|^2 d z.\label{int:betaMISE}
	\end{align}
\end{prop}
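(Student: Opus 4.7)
The plan is to work on the Fourier side and exploit Plancherel's identity:
\[
\E \int | \be'_{N,T}(y) - \be'(y) |^2 \, dy = \frac{1}{2\pi} \, \E \int | \mathcal{F}(\be'_{N,T})(z) - \mathcal{F}(\be')(z) |^2 \, dz.
\]
On the event $\{|\mathcal{F}(\Pi_{N,T})(z)| > \om\}$ the estimator equals $-\mathcal{F}(\Psi_{N,T})/\mathcal{F}(\Pi_{N,T})$, and using the identity $\mathcal{F}(\Psi) = -\mathcal{F}(\be') \mathcal{F}(\pi)$ I would decompose
\[
\mathcal{F}(\be'_{N,T}) - \mathcal{F}(\be') = -\frac{\mathcal{F}(\Psi_{N,T}) - \mathcal{F}(\Psi)}{\mathcal{F}(\Pi_{N,T})} - \mathcal{F}(\be') \cdot \frac{\mathcal{F}(\Pi_{N,T}) - \mathcal{F}(\pi)}{\mathcal{F}(\Pi_{N,T})},
\]
while on the complement the difference is just $-\mathcal{F}(\be')(z)$. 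The first summand is handled by bounding $|\mathcal{F}(\Pi_{N,T})|^{-2}\leq \om^{-2}$ on its support and invoking Plancherel once more to convert it into $\om^{-2}\E\int |\Psi_{N,T}-\Psi|^2 dy$, yielding the first piece of \eqref{int:betaMISE}.

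The pointwise variance input that drives the remaining terms is
\[
\E | \mathcal{F}(\Pi_{N,T})(z) - \mathcal{F}(\pi)(z) |^2 \leq z^2 \, \E [W_1^2(\Pi_{N,T}, \Pi)] \lesssim z^2 N_T^{-1},
\]
which follows from Theorem~\ref{th1} together with the observation that $x \mapsto e^{\mathrm{i}zx}$ is Lipschitz of constant $|z|$. To apply it I split the frequency axis along $\{|\mathcal{F}(\pi)(z)|>2\om\}$ and $\{|\mathcal{F}(\pi)(z)|\leq 2\om\}$. On the \emph{good} set and on the typical subevent $\{|\mathcal{F}(\Pi_{N,T})-\mathcal{F}(\pi)|\leq \om\}$ the reverse triangle inequality gives $|\mathcal{F}(\Pi_{N,T})| \geq |\mathcal{F}(\pi)|/2$, so $|\mathcal{F}(\Pi_{N,T})|^{-2} \leq 4|\mathcal{F}(\pi)|^{-2}$, and the second summand is bounded by
\[
4 N_T^{-1} \int_{|\mathcal{F}(\pi)(z)|>2\om} |\mathcal{F}(\be')(z)|^2 \, |\mathcal{F}(\pi)(z)|^{-2} \, z^2 \, dz,
\]
matching the second term of \eqref{int:betaMISE}. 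The atypical subevent $\{|\mathcal{F}(\Pi_{N,T})-\mathcal{F}(\pi)|>\om\}$ has probability $\lesssim z^2 N_T^{-1}\om^{-2}$ by Markov, and after a crude bound on $|\mathcal{F}(\be')(\mathcal{F}(\Pi_{N,T})-\mathcal{F}(\pi))/\mathcal{F}(\Pi_{N,T})|$ I expect its contribution to collapse into the $\om^{-2} N_T^{-1}$ term already present in the first piece.

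On the \emph{bad} set $\{|\mathcal{F}(\pi)|\leq 2\om\}$ both the off-event term $-\mathcal{F}(\be')$ and the surviving on-event pieces are controlled by $|\mathcal{F}(\be')(z)|^2$ (for the on-event second summand, exploit again $|\mathcal{F}(\Pi_{N,T})|>\om$ combined with $\E|\mathcal{F}(\Pi_{N,T})-\mathcal{F}(\pi)|^2\lesssim z^2 N_T^{-1}$ to estimate the quotient), producing the third term $\int_{|\mathcal{F}(\pi)|\leq 2\om} |\mathcal{F}(\be')|^2 dz$. The main obstacle I foresee is the bookkeeping on the bad subevents: making sure that the random threshold $\{|\mathcal{F}(\Pi_{N,T})|>\om\}$ and the deterministic threshold $\{|\mathcal{F}(\pi)|>2\om\}$ are compared correctly, and that the small-probability events arising from $|\mathcal{F}(\Pi_{N,T})-\mathcal{F}(\pi)|>\om$ do not produce spurious terms outside the three in \eqref{int:betaMISE}; the assumption $\be''\in L^2(\R)$, which yields $\mathcal{F}(\be')(z) z \in L^2$, will be used to guarantee integrability in the second term.
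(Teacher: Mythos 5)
Your proposal is correct and follows essentially the same route as the paper: Plancherel, the decomposition of $\mathcal{F}(\be'_{N,T})-\mathcal{F}(\be')$ into a $\Psi$-error term and an $\mathcal{F}(\be')\cdot(\mathcal{F}(\Pi_{N,T})-\mathcal{F}(\pi))/\mathcal{F}(\Pi_{N,T})$ term on the on-event (plus the plain $-\mathcal{F}(\be')$ on the off-event), the Kantorovich--Rubinstein bound $|\mathcal{F}(\Pi_{N,T}-\Pi)(z)|\le|z|W_1(\Pi_{N,T},\Pi)$, the frequency split at $|\mathcal{F}(\pi)(z)|\gtrless 2\om$, and Markov's inequality.

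The one organizational difference worth noting: the paper introduces the intermediate function $\be^*_{N,T}$ with $\mathcal{F}(\be^*_{N,T})=\mathcal{F}(\be')\1_{\{|\mathcal{F}(\Pi_{N,T})|>\om\}}$, so that the second and third terms in \eqref{int:betaMISE} arise \emph{entirely} from $\E\int|\be^*_{N,T}-\be'|^2 = \frac{1}{2\pi}\int|\mathcal{F}(\be')|^2\P(|\mathcal{F}(\Pi_{N,T})(z)|\le\om)\,dz$, via Markov on $\{|\mathcal{F}(\pi)|>2\om\}$ and the trivial bound $\P\le 1$ on the complement; the on-event second summand is then simply bounded by $\om^{-1}|\mathcal{F}(\be')(z)|\,|z|\,W_1$ and absorbed into the $\om^{-2}N_T^{-1}$ term (using $\be''\in L^2$). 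Your route instead produces the second term from the on-event second summand on the typical subevent $\{|\mathcal{F}(\Pi_{N,T})-\mathcal{F}(\pi)|\le\om\}$, and routes the off-event on the good frequency set through the atypical-subevent probability $\lesssim z^2N_T^{-1}\om^{-2}$; this works and gives the same final bound, but requires the extra subevent and hence the bookkeeping you flagged. One small inaccuracy in your sketch: the surviving on-event second summand on the bad set $\{|\mathcal{F}(\pi)|\le2\om\}$ is not ``controlled by $|\mathcal{F}(\be')(z)|^2$'' pointwise; after integration it is of order $\om^{-2}N_T^{-1}\int|\mathcal{F}(\be')|^2z^2\,dz\lesssim\om^{-2}N_T^{-1}$ and is absorbed into the first piece, while only the off-event term on the bad set yields the third term $\int_{|\mathcal{F}(\pi)|\le2\om}|\mathcal{F}(\be')|^2\,dz$.
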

\begin{proof} See Section  \ref{sec6}.
\end{proof}

\noindent
We observe that, up to the presence of the factor $\om^{-2}\to \infty$ which can be chosen arbitrarily, the convergence rate for the nonparametric part 
$\be'$ is transferred from Proposition \ref{prop2}. Indeed, the next result shows that the second and the third terms in \eqref{int:betaMISE} are negligible
under appropriate assumption on $\be'$.

\begin{prop}
	\label{prop:rate-entire}
	In the setting of Proposition \ref{prop3} assume that $\be'$ is an entire function of the first order and type less than $\vt>0$, i.e.
	\bee
	|\be'(z)| \le A \exp (  \vt |z| ), \qquad z \in \CC, 
	\eee
	with $A >0$ and $\vt \le \lambda^{1/2}$. 
	Let $\liminf_{y \to \infty} \Phi_{p,\be'} (y)>0$ for $p>0$.
	Choose $U = (C \log N_T)^{1/(2J_1)}$ for some $0< C < m/(2(m+2) \bar \alpha_1)$. Then 
	\begin{equation}\label{MISEbeta}
	\E\left[ \int_{\R} |\be'_{N,T} (y) -\be' (y)|^2 d y\right] \lesssim \om^{-2} (\log N_T)^{-p/J_1} \Phi_{p,\be'}^2 ( ( C \log N_T )^{1/(2J_1)} \epsilon/2 ).
	\end{equation}
	Moreover, if $\limsup_{y \to \infty} \Phi_{p,\be'} (y)>0$, then
	\bee
	\E\left[\int_{\R} | \be'_{N,T}(y)-\be' (y) |^2 d y \right]  \lesssim \om^{-2} (\log N_T)^{-p/J_1}.
	\eee
\end{prop}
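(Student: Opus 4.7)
The plan is to start from Proposition~\ref{prop3} and show that, under the entire-function hypothesis on $\be'$, the last two terms on its right-hand side drop out to leading order, so that the bound is driven by the first term via Corollary~\ref{cor1}. First, since $\be'$ is bounded and in $L^1(\R) \cap L^2(\R)$ and extends to an entire function of exponential type at most $\vt$, the Paley--Wiener theorem yields $\mathrm{supp}\,\mathcal{F}(\be') \subseteq [-\vt,\vt]$; hence every integral against $\mathcal{F}(\be')$ collapses to a compact frequency window.

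Second, I would establish the uniform lower bound $|\mathcal{F}(\pi)(z)| \ge 1/2$ for $|z| \le \sqrt{\la}$. From \eqref{pidef} we have $-\log \pi = \log Z_\pi + \vp \star \pi$, and \eqref{convex} gives $(-\log \pi)'' = \vp'' \star \pi \ge \la$; thus $\pi$ is $\la$-strongly log-concave, and the Brascamp--Lieb inequality applied to $f(x)=x$ yields $\mathrm{Var}_\pi \le 1/\la$. Since $\pi$ is even, $\mathcal{F}(\pi)(z) = \int \cos(zy)\pi(y)\,dy$, so the elementary estimate $1-\cos u \le u^2/2$ gives
\[
1 - \mathcal{F}(\pi)(z) \le \frac{z^2}{2}\mathrm{Var}_\pi \le \frac{z^2}{2\la},
\]
whence $\mathcal{F}(\pi)(z) \ge 1/2$ on $\{|z| \le \sqrt{\la}\}$, a region containing $[-\vt,\vt]$ by the assumption $\vt \le \sqrt{\la}$.

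Combining these two facts, and noting that $\om = \om_{N,T} \to 0$ so that $2\om < 1/2$ for $N,T$ large enough, the integration set in the third term of \eqref{int:betaMISE} is disjoint from $\mathrm{supp}\,\mathcal{F}(\be')$, hence that term is exactly zero; and the second term is bounded by $4\vt^2 N_T^{-1} \|\mathcal{F}(\be')\|_{L^2}^2 = O(N_T^{-1})$, which is of smaller order than any negative power of $\log N_T$. The first term of \eqref{int:betaMISE} is then handled directly by Corollary~\ref{cor1} applied with $U = (C\log N_T)^{1/(2J_1)}$, yielding the rate $(\log N_T)^{-p/J_1}\Phi_{p,\be'}^2((C\log N_T)^{1/(2J_1)}\epsilon/2)$; multiplying by $\om^{-2}$ produces \eqref{MISEbeta}, and the second assertion follows at once since $\Phi_{p,\be'}$ is then bounded.

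The main obstacle will be the quantitative Brascamp--Lieb lower bound in the second step, as this is where the sharp threshold $\vt \le \la^{1/2}$ is exploited: it guarantees that $\mathcal{F}(\be')$ is supported in the region where $|\mathcal{F}(\pi)|$ is uniformly bounded below, so that the deconvolution does not inflate the error. Once this bound is established, the remainder of the argument is a routine synthesis of Proposition~\ref{prop3}, Corollary~\ref{cor1}, and the Paley--Wiener reduction.
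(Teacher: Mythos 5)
Your proof is correct and follows essentially the same route as the paper: reduce via Paley--Wiener to the frequency window $|z|\le\vt$, establish the uniform lower bound $|\mathcal{F}(\pi)(z)|\ge 1/2$ there via $1-\mathcal{F}(\pi)(z)\le z^2 m_2/2$ and $m_2\le 1/\la\le 1/\vt^2$, and then invoke Corollary~\ref{cor1}. The only divergence is that you derive $m_2\le 1/\la$ from the Brascamp--Lieb inequality (using $\la$-strong log-concavity of $\pi$), whereas the paper obtains it from Lemma~\ref{mom}, which proves the stronger moment bound $m_{2k}\le (2k-1)!!/\la^k$ for all $k$ by a direct integration-by-parts argument exploiting the convexity of $\vp$; both yield the needed estimate, and your route is a clean alternative for the special case $k=1$, though it appeals to an external theorem rather than the self-contained computation already present in the paper.
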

\begin{proof} See Section  \ref{sec6}.
\end{proof}

\begin{ex}
	As an example of functions satisfying the conditions of Proposition \ref{prop:rate-entire} we may consider 
	\bee
	\be_1(y) = \frac{1-\cos(b y)}{y^2}, \ b>0, \qquad \be_2(y) = \frac{\sin^{2k}(y)}{y^{2k}}, \ k \in \N.
	\eee
	One can easily check that both function fulfil the necessary assumptions with $p=3/2$ 
	in case of $\be_1$ and 
	$p=2k-1/2$ 
	in case of $\be_2$.
	\qed
\end{ex}

\section{Lower bound for the nonparametric component} \label{sec5}
\setcounter{equation}{0}
\renewcommand{\theequation}{\thesection.\arabic{equation}}

In this section we will derive the lower bound for the estimation of the nonparametric component $\be'$. We consider the simple model with i.i.d.\ observations $Y_1, \dots, Y_N$ having the following density:
\bee \label{iidmodel}
\pi_{\be}(y)= Z_{\pi_{\be}}^{-1} \exp \Big(- \Big( \Big( \sum_{j=1}^J a_j \vp_j + \be \Big) \star \pi_{\be} \Big) (y) \Big), \qquad y \in \R,
\eee
where
\bee 
Z_{\pi_\be} =  \int_{\R} \exp \Big(- \Big( \Big( \sum_{j=1}^J a_j \vp_j + \be \Big) \star \pi_{\be} \Big)(y) \Big) dy
\eee
and $\vp_j (y) = y^{2j}$, $y \in \R$, $1 \le j \le J$, for given $J \ge 1$. We assume that the constants $a_1 >0$, $a_2 \ge 0$, \dots, $a_{J-1} \ge 0$, $a_J>0$ are known, the function $\be(y)$ is even and such that $\vp''(y) \ge \la$, $y \in \R$, with known $\la >0$.
We remark that the nonparametric estimation problem is similar in spirit to the classical deconvolution problem, but we can not apply the same techniques to derive the lower bound. 
\par
We consider the following class of functions 
\begin{align*}
\f_{p,C, C_0, \boldsymbol{a},\la} := \Big\{\Big.f \in C_b^{2}(\R): ~&\|f\|_{\infty} \le C_0,~\|f' \|_\infty \le C_1,~\inf_{y \in {\R}} f''(y)\geq  -C_2\\
\qquad\qquad\qquad &\limsup_{y \to \infty} y^{2p} \int_y^\infty |f'(u)|^2 du \le C \Big\}\Big.,
\end{align*}
where $C_b^2(\R)$ denotes the space of twice continuously differentiable functions $f : \R \to \R$ such that
$f,f',f''$ are bounded. Moreover, $C,C_0>0$ and
\bee
C_1 := \la^{1/2} \Big(1 - \sum_{1 < j \le J} 2j c_j a_j / \la^j \Big)>0, \qquad C_2 := 2a_1-\la>0,
\eee 
where
$c^2_j := 2 (2(2j-1))! / (2j-1)!$, $1<j\le J$. We remark that the condition $C_1>0$ holds whenever $\la>0$ is sufficiently large. 
We also note that assumption (A) is automatically satisfied when $\be \in \f_{p,C, C_0, \boldsymbol{a},\la}$. The main result of this section is the minimax bound over the functional class $\f_{p,C, C_0, \boldsymbol{a},\la}$.

\begin{theo} \label{minimax}
	Denote the law of $Y_1$ by $\Pi_{\be}$, and consider the functional class $\f_{p,C, C_0, \boldsymbol{a},\la}$ for 
	$p>1/2$. Then there exists a constant $c_0>0$ (depending on $p$, $C$, $C_0$, $\boldsymbol{a}$, $\la$) such that
	\bee \label{minimaxrate}
	\inf_{\be'_N} \sup_{\be \in \f_{p,C, C_0, \boldsymbol{a},\la}} \Pi_{\be}^{\otimes N}\Big( \|\be'_N - \be'\|_{L^2(\R)}^2 >c_0 (\log N)^{-p/J} \Big)>0.
	\eee
\end{theo}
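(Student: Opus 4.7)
The plan is to apply Le Cam's two-hypothesis method: I will exhibit two drifts $\be_0, \be_1 \in \f_{p,C,C_0,\boldsymbol{a},\la}$ that are simultaneously well separated, $\|\be_1' - \be_0'\|_{L^2(\R)}^2 \gtrsim (\log N)^{-p/J}$, and statistically indistinguishable, $\chi^2(\Pi_{\be_1}^{\otimes N}, \Pi_{\be_0}^{\otimes N}) = O(1)$. The standard Le Cam inequality then yields \eqref{minimaxrate}. I take the baseline $\be_0 \equiv 0$ (which trivially belongs to the class) and, for scales $U_N$ and amplitudes $c_N$ to be chosen, set $\be_1(y) := c_N \chi(y/U_N)$, where $\chi \in C_c^\infty(\R)$ is a fixed smooth non-negative even bump supported in $\{1 \le |y| \le 2\}$. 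Thus $\be_1$ is even and supported in the tail region $\{U_N \le |y| \le 2U_N\}$. A direct computation gives $\|\be_1'\|_{L^2}^2 \asymp c_N^2/U_N$; the tail constraint $y^{2p}\int_y^\infty |\be_1'(u)|^2 du \le C$ is binding at $y \asymp U_N$ and forces $c_N \lesssim U_N^{(1-2p)/2}$. Since $p>1/2$, the choice $c_N \asymp U_N^{(1-2p)/2}$ makes $c_N \to 0$, so all remaining class constraints on $\|\be_1\|_\infty$, $\|\be_1'\|_\infty$ and $\inf_y \be_1''(y)$ hold for $N$ large, while $\|\be_1'\|_{L^2}^2 \asymp U_N^{-2p}$.

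The main technical step is to control the $L^2(\pi_0^{-1})$ distance between $\pi_0 := \pi_{\be_0}$ and $\pi_1 := \pi_{\be_1}$. Subtracting the two fixed-point identities \eqref{iidmodel} for $\pi_0$ and $\pi_1$ yields
\begin{equation*}
\log \frac{\pi_1}{\pi_0} = -\log \frac{Z_{\pi_1}}{Z_{\pi_0}} - (\be_1 \star \pi_0) - \Big( \sum_j a_j \vp_j + \be_1 \Big) \star (\pi_1 - \pi_0) .
\end{equation*}
Because $\vp''\ge \la$, the log-concave measure $\pi_0$ satisfies a Bakry--\'Emery Poincar\'e inequality with constant at most $\la^{-1}$, which I will use to show that the linearised fixed-point operator is a contraction on $L^2(\pi_0^{-1})$. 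A standard fixed-point argument then produces
\begin{equation*}
\|\pi_1 - \pi_0\|_{L^2(\pi_0^{-1})}^2 \lesssim \|\be_1 \star \pi_0\|_{L^2(\pi_0)}^2 .
\end{equation*}
The key asymptotic input is the pointwise two-sided estimate $\pi_0(y) \asymp \exp(-a_J y^{2J})$ for $|y|$ large, which follows from the fact that the effective potential $\vp_0 \star \pi_0$, with $\vp_0 := \sum_j a_j \vp_j$, is a polynomial in $y$ of degree $2J$ with leading coefficient $a_J$. Since $\be_1$ is supported in $\{U_N \le |y| \le 2U_N\}$, a direct pointwise estimation (splitting into the regions $|y|\le U_N/2$, where $\pi_0(y-u)\lesssim \exp(-c|y-u|^{2J})$ on the support of $\be_1$, and $|y|>U_N/2$, where $\pi_0(y)$ itself is exponentially small) yields
\begin{equation*}
\int \pi_0(y) (\be_1 \star \pi_0)^2(y) dy \lesssim c_N^2 U_N^3 \exp(-c_* U_N^{2J}), \qquad c_* > 0.
\end{equation*}

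Choosing $U_N = (c_0^{-1} \log N)^{1/(2J)}$ with $c_0 < c_*$ gives $N\|\pi_1-\pi_0\|_{L^2(\pi_0^{-1})}^2 \lesssim N^{1 - c_*/c_0} (\log N)^{(4-2p)/(2J)} \to 0$, so
\begin{equation*}
\chi^2\bigl(\Pi_1^{\otimes N}, \Pi_0^{\otimes N}\bigr) \le \exp\bigl(N \|\pi_1-\pi_0\|_{L^2(\pi_0^{-1})}^2\bigr) - 1 = O(1),
\end{equation*}
while simultaneously $\|\be_1' - \be_0'\|_{L^2}^2 \asymp U_N^{-2p} \asymp (\log N)^{-p/J}$. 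Le Cam's lemma then delivers the claimed lower bound with strictly positive probability. The principal technical difficulty will be the quantitative linearisation of the nonlinear fixed-point equation $\pi_\be = Z_{\pi_\be}^{-1} \exp(-(\vp \star \pi_\be))$ together with establishing the sharp pointwise control $\pi_0(y) \asymp \exp(-a_J y^{2J})$; the former exploits the Poincar\'e inequality afforded by $\vp''\ge\la$, the latter uses log-concavity of $\pi_0$ and the explicit form of $\vp_0 \star \pi_0$. Crucially, it is the balance between the polynomial separation $U_N^{-2p}$ and the physical-space exponential proximity $\exp(-c_* U_N^{2J})$ (rather than the weaker Fourier decay $|\f(\pi_0)(z)|\lesssim \exp(-c|z|^{2J/(2J-1)})$, which would give only the inferior rate $(\log N)^{-p(2J-1)/J}$) that produces the claimed rate $(\log N)^{-p/J}$ matching the upper bound of Proposition~\ref{prop:rate-entire}.
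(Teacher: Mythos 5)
Your overall strategy (two-hypothesis method, test function a bump of amplitude $c_N\asymp U_N^{(1-2p)/2}$ supported at scale $U_N\asymp(\log N)^{1/(2J)}$, rate determined by the physical-space tail $\exp(-a_J U_N^{2J})$ rather than the weaker Fourier tail) coincides with the paper's in its essentials. But you substitute $\chi^2$ for KL and, more importantly, you propose to control $\|\pi_1-\pi_0\|_{L^2(\pi_0^{-1})}$ by linearising the nonlinear fixed-point identity $\pi_\beta = Z_{\pi_\beta}^{-1}\exp(-\vp\star\pi_\beta)$ and invoking Poincar\'e to make the linearised operator a contraction. This is precisely where you have a genuine gap. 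The bilinear form $\langle u, \vp\star(u\pi_0)\rangle_{L^2(\pi_0)}$ has no definite sign on $\{\int u\,\pi_0 = 0\}$ (already the $a_1(y-x)^2$ piece contributes $-2a_1(\int uy\,\pi_0)^2\le 0$), so a contraction estimate is not automatic from $\vp''\ge\lambda$ alone; it can only hold under quantitative smallness conditions on the coefficients and $\|\beta'\|_\infty$, which is exactly the role the class constants $C_1, C_2$ play in the paper. You flag this as ``the principal technical difficulty'' but do not carry it out, and as stated it is unsubstantiated.

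The paper sidesteps the linearisation entirely. It works with KL divergence and uses the log-Sobolev inequality for $\Pi_\beta$ (valid because $(\log\pi_\beta)''\le-\lambda$) to reduce $K(\Pi_{\beta_1},\Pi_{\beta_0})$ to $\frac{1}{2\lambda}\int|\pi_{\beta_1}'/\pi_{\beta_1}-\pi_{\beta_0}'/\pi_{\beta_0}|^2\pi_{\beta_1}$. The log-density-derivative difference decomposes exactly (no Taylor remainder) as $g_0 + g_1 + \sum_{j>1}a_jg_j$ where $g_0=(\beta_0'-\beta_1')\star\pi_{\beta_0}$ is the term you also isolate, and each of $g_1,\dots,g_J$ is shown by Cauchy--Schwarz and moment bounds to satisfy $\frac{1}{2\lambda}\int|g_j|^2\pi_{\beta_1}\le\gamma_j^2 K(\Pi_{\beta_1},\Pi_{\beta_0})$. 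Because the class constants are tuned so that $\sum_{j>1}a_j\gamma_j+\gamma_1<1$, the $K$ on the right absorbs into the $K$ on the left and one gets $K\lesssim\int|g_0|^2\pi_{\beta_1}$ with no linearisation at all. This bootstrap is the key lemma your sketch lacks a replacement for. If you want to pursue the $\chi^2$/Poincar\'e route you must prove the contraction estimate under explicit class restrictions analogous to the paper's $C_1,C_2$; or you can adopt the paper's LSI-at-the-level-of-log-densities reduction, after which your choice $\beta_0\equiv 0$, $\beta_1=c_N\chi(\cdot/U_N)$ and your tail estimates go through essentially unchanged. Two small additional points: the two-sided claim $\pi_0(y)\asymp\exp(-a_Jy^{2J})$ is false as stated (lower-order polynomial terms appear in the exponent); you only need, and should only claim, the one-sided bound $\pi_0(y)\lesssim\exp(-\alpha_Jy^{2J})$, which is what the paper uses. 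And the tail constraint in the class definition is a $\limsup_{y\to\infty}$, which is vacuously $0$ for your compactly supported perturbation; the effective constraint forcing $c_N\lesssim U_N^{(1-2p)/2}$ is really the paper's auxiliary condition of type \eqref{condi2}, which is a $\sup$-in-$y$ bound, so you should make that explicit rather than attributing the binding to the class definition verbatim.
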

\begin{proof} See Section  \ref{sec6}.
\end{proof}

\noindent
We remark that our estimator $\be'_{N,T}$ matches the lower bound up to the factor $\om^{-2}$, which can be chosen to diverge to $\infty$ at an
arbitrary slow rate. 

\begin{rem} \label{rem2} \rm
The result of Theorem \ref{minimax} can be compared to a classical deconvolution problem. Consider a model
\[
Y_i=X_i +\ep_i, \qquad i=1,\ldots, N,
\]
where $(X_i)_{i\ge 1}$ and $(\ep_i)_{i\ge 1}$ are mutually independent i.i.d.\ sequences. Assume that $X_1$ (resp. $\ep_1$) has a Lebesgue density $f$
(resp. $g$), and we are in the super smooth setting, that is,
\[
\f(g)(z) \sim \exp(-\text{const} \cdot |z|^{2J}), \quad |z|\to \infty.
\] 
When the density $f$ satisfies the condition $\int_{\R} |\f(f)(z)|^2 |z|^{2p} dz \leq C$, $p>1/2$, it is well known that the minimax rate for the estimation of the density $f$ becomes $(\log N)^{-p/J}$ (see e.g. Theorem 2.14(b) in \cite{M09}). While in the classical deconvolution problem the assumptions are imposed in the Fourier domain, we have comparable assumptions on the functions themselves. 
Notice that due to the structure of the model $\pi$ plays the role of the noise density and
the condition on  $\f(g)$ can be compared to the decay of $\pi_{\be}$, which is determined by the leading polynomial of degree $J$. On the other hand, the integral condition on $\f(f)$ is related to the corresponding tail condition on $\be'$. 
\end{rem}

\section{Conclusions and outlook} \label{secOut}
In this work we study the problem of estimating the drift of a MV-SDE based on observations of the corresponding particle system. We propose a kernel-type estimator and provide theoretical analysis of its convergence. 
In particular, for the nonparametric part of the model, we derive minimax convergence rates and show rate-optimality of our proposed estimator.
As a promising future research direction, one can consider the case of continuous time observations and high-dimensional MV-SDEs with general form of the drift function. In another direction the problem of estimating diffusion coefficient remains completely open.

\section{Proofs} \label{sec6}
\setcounter{equation}{0}
\renewcommand{\theequation}{\thesection.\arabic{equation}}

\subsection{Preliminary results}

\begin{lem}\label{lem2}
	Set $\bar \vp_1 (y): = \sum_{0 < j \le J_1} \al_j \vp_j(y)$, $y\in\R$. Then $\pi \in C^\infty (\R)$ and for every $n$, 
	\bee
	|\pi^{(n)} (y)| \lesssim (1+|\bar \vp'_1 (y)|)^n \exp (- \bar \vp'_1 (y)), \qquad y\in\R.
	\eee
\end{lem}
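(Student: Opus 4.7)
My plan is to combine the integral representation \eqref{pidef} with Lemma~\ref{lem1} to write
\[
\pi(y)=Z_\pi^{-1}\exp(-G(y)),\qquad G(y):=\al_0+\bar\vp_1(y)+\sum_{J_1<j\le J}\al_j\cos(\te_j y)+(\be\star\pi)(y),
\]
so that only the polynomial piece $\bar\vp_1$ contributes to the growth of $G$ at infinity, while every other term in $G$ will turn out to be smooth with uniformly bounded derivatives of every order. Since $\exp(-\bar\vp_1(y))$ decays super-exponentially and the bounded part $G-\bar\vp_1$ contributes only a multiplicative constant, $\pi$ itself is already bounded with super-exponential decay.

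For the $C^\infty$ claim I will run a bootstrap. Because $\be\in C^2(\R)$ with $\be,\be',\be''$ bounded and $\pi\in L^1(\R)$, the convolution $\be\star\pi$ lies in $C^2$ with $(\be\star\pi)^{(k)}=\be^{(k)}\star\pi$ for $k=1,2$. Hence $G\in C^2$, so $\pi=Z_\pi^{-1}\exp(-G)\in C^2$, and the identity $\pi'=-G'\pi$ combined with the polynomial bound on $G'$ and the super-exponential decay of $\pi$ yields $\pi,\pi',\pi''\in L^1$. Transferring derivatives across the convolution via $(\be\star\pi)^{(k+2)}=\be''\star\pi^{(k)}$ then adds two derivatives to $\be\star\pi$, hence to $G$ and to $\pi$. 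Iterating this step gives $\pi\in C^\infty$ and, at every stage, integrability and super-exponential decay of $\pi^{(k)}$.

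To obtain the pointwise bound I apply Faà di Bruno's formula to $\pi=Z_\pi^{-1}\exp(-G)$, which yields
\[
\pi^{(n)}(y)=Z_\pi^{-1}\exp(-G(y))\,B_n\bigl(-G'(y),\ldots,-G^{(n)}(y)\bigr),
\]
with $B_n$ the $n$-th complete Bell polynomial. The cosines contribute smooth bounded terms, and by the previous step $(\be\star\pi)$ also has uniformly bounded derivatives of every order. Since $\bar\vp_1$ is an even polynomial of degree $2J_1$ with leading coefficient $2J_1\al_{J_1}>0$, one has $|\bar\vp_1^{(k)}(y)|\lesssim 1+|y|^{2J_1-k}\lesssim 1+|\bar\vp'_1(y)|$ for every $k\ge 1$, and hence $|G^{(k)}(y)|\lesssim 1+|\bar\vp'_1(y)|$ for $k\ge 1$. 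Because $B_n$ has weighted degree $n$ in its arguments, this gives $|B_n(-G',\ldots,-G^{(n)})|\lesssim (1+|\bar\vp'_1(y)|)^n$, and combined with $\exp(-G(y))\lesssim \exp(-\bar\vp_1(y))$ this delivers the claim.

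The main subtlety is keeping the bootstrap honest: at each round one has to know that the freshly produced derivatives $\pi^{(k)}$ are integrable in order to push derivatives across the convolution $\be\star\pi$. This follows because Faà di Bruno expresses $\pi^{(k)}$ as a polynomial in derivatives of $G$ multiplied by $\pi$, and the polynomial growth of those derivatives is dominated by the super-exponential factor $\exp(-\bar\vp_1)$; once this is observed, the induction closes without additional hypotheses.
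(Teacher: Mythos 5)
Your argument is essentially the paper's: write $\pi = Z_\pi^{-1}\exp(-G)$ with $G = \al_0 + \bar\vp_1 + \bar\vp_2 + \be\star\pi$, observe that only $\bar\vp_1$ is unbounded while $\bar\vp_2$ and $\be\star\pi$ have uniformly bounded derivatives, bound $|G^{(k)}(y)| \lesssim 1 + |\bar\vp_1'(y)|$ for $k\ge 1$, and bootstrap to $\pi\in C^\infty$ by transferring derivatives across the convolution and using the super-exponential decay. Packaging the recursion as Fa\`a di Bruno with complete Bell polynomials rather than iterating $\pi^{(n+1)} = -\sum_{k=0}^{n}\binom{n}{k}\bar\vp^{(k+1)}\pi^{(n-k)}$, as the paper does, is only a cosmetic difference, since $B_n$ is exactly what that Leibniz induction generates.

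One small hypothesis mismatch is worth flagging. Your bootstrap invokes $(\be\star\pi)^{(k+2)} = \be''\star\pi^{(k)}$ with $\be''$ bounded and $\pi^{(k)}\in L^1(\R)$, but Section~\ref{sec3} only assumes $\be\in C^2(\R)$ bounded, $\be'$ bounded and in $L^1(\R)$, and (later) $\be''\in L^2(\R)$; boundedness of $\be''$ is not granted. The paper sidesteps this by differentiating $\be'\star\pi$ with the derivatives landing on $\pi$, i.e. $(\be'\star\pi)^{(k)} = \be'\star\pi^{(k)}$, and pairing $\|\be'\|_{L^1(\R)}<\infty$ with $\|\pi^{(k)}\|_\infty < \infty$, which stays within the stated assumptions. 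Your version can be repaired either by adopting that transfer, or by pairing $\be''\in L^2(\R)$ with $\pi^{(k)}\in L^2(\R)$ (which your inductive decay bound delivers), but as written it quietly strengthens the hypotheses.
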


\begin{proof}
	We decompose $\bar \vp = \vp \star \pi$ as $\bar \vp = \al_0 + \bar \vp_1 + \bar \vp_2 + \be \star \pi$. Here $\bar \vp_2: = \sum_{J_1 < j \le J} \al_j \vp_j$ is bounded, and $\| \be \star \pi \|_\infty \le \| \be \|_\infty \| \pi \|_{L^1(\R)} < \infty$. Hence, we obtain
	\bee
	\pi (y) = Z_\pi^{-1} \exp (- \bar \vp (y)) \lesssim \exp (- \bar \vp_1 (y)), \qquad y \in \R.
	\eee
	We now consider its derivative. Since $\be$ has a bounded derivative,  so does $\be \star \pi$. We obtain
	$\pi' = - \bar \vp' \pi $, where 
	$\bar \vp' = \bar \vp'_1 + \bar \vp'_2 + \be'\star \pi$ satisfies
	\bee
	| \bar \vp' (y) | \lesssim 1+|\bar \vp'_1(y)|, \qquad y \in \R.
	\eee
	That is, statement of the proposition holds for $n=1$. 	
	For $n \ge 1$ it follows by induction using
	\bee
	\pi^{(n+1)} = (\pi')^{(n)}= - \sum_{k=0}^{n} \binom{n}{k} \bar \vp^{(k+1)} \pi^{(n-k)} 
	\eee
	with $\bar \vp^{(k+1)} = \bar \vp_1^{(k+1)}+\bar \vp_2^{(k+1)}+(\be'\star \pi)^{(k)}$, where $(\be'\star \pi)^{(k)} = \be' \star \pi^{(k)}$ is bounded when $\| \pi^{(k)} \|_\infty < \infty$, $\| \be' \|_{L^1(\R)} < \infty$. 
\end{proof}

\begin{lem}\label{mom}
	Moments of the density $\pi$ in \eqref{pidef} satisfy 
	\bee
	m_{2k} \le (2k-1)!! / \la^k, \qquad k \in \N.
	\eee
\end{lem}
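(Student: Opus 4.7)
The plan is to exploit strong log-concavity of $\pi$ induced by assumption (A), combined with an integration-by-parts identity that yields a recursion $\la\, m_{2k} \le (2k-1)\, m_{2k-2}$.

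First I would collect the two structural facts about $\bar\vp := \vp \star \pi$ that drive the argument. Since $\vp$ is even and $\pi$ is even (as the stationary solution of a symmetric McKean--Vlasov equation; equivalently this can be read off from \eqref{pidef} by a change of variables), the convolution $\bar\vp$ is also even, so $\bar\vp'$ is odd and in particular $\bar\vp'(0)=0$. Next, differentiating under the integral sign and using \eqref{convex} together with $\int \pi = 1$ gives
\begin{equation*}
\bar\vp''(x) \;=\; (\vp''\star \pi)(x) \;\ge\; \la, \qquad x \in \R.
\end{equation*}
Since $\bar\vp'(x) = \int_0^x \bar\vp''(u)\,du$, the odd function $\bar\vp'$ satisfies $\bar\vp'(x) \ge \la x$ for $x \ge 0$ and $\bar\vp'(x) \le \la x$ for $x \le 0$; in either case
\begin{equation*}
x\,\bar\vp'(x) \;\ge\; \la x^2, \qquad x \in \R.
\end{equation*}

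Next I would set up the integration by parts. From \eqref{pidef} we have $\pi'(x) = -\bar\vp'(x)\,\pi(x)$, hence for $k\ge 1$,
\begin{equation*}
\int_{\R} x^{2k-1}\, \bar\vp'(x)\, \pi(x)\, dx \;=\; -\int_{\R} x^{2k-1}\, \pi'(x)\, dx \;=\; (2k-1)\, m_{2k-2},
\end{equation*}
where the boundary contribution vanishes because Lemma \ref{lem2} (the super-polynomial decay of $\pi$ coming from the polynomial part $\bar\vp_1$) ensures $x^{2k-1}\pi(x)\to 0$ at $\pm\infty$ and guarantees finiteness of all moments. Multiplying the pointwise bound $x\,\bar\vp'(x) \ge \la x^2$ by $x^{2k-2}\pi(x)\ge 0$ and integrating yields
\begin{equation*}
\la\, m_{2k} \;\le\; \int_{\R} x^{2k-1}\bar\vp'(x)\pi(x)\,dx \;=\; (2k-1)\, m_{2k-2}.
\end{equation*}

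Finally I would close the proof by induction on $k$. The base case $m_0 = 1$ is trivial, and the recursion $m_{2k} \le (2k-1)\la^{-1} m_{2k-2}$ yields $m_{2k} \le (2k-1)!!/\la^k$ as claimed. I do not expect a real obstacle here: the only subtle point is justifying that the boundary terms in the integration by parts vanish and that the moments are finite, both of which follow immediately from Lemma \ref{lem2}, since the polynomial contribution $\bar\vp_1$ forces $\pi$ to decay faster than any polynomial. The use of the evenness of $\pi$ is important — without it one could only write $\bar\vp'(x) - \bar\vp'(0)$ in place of $\bar\vp'(x)$, and the constant term $\bar\vp'(0)$ would spoil the sign in the key pointwise estimate.
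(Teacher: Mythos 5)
Your proof is correct, but it takes a genuinely different route from the paper's. The paper keeps the unconvolved $\vp'$, symmetrizes the integral $I_k = \int y^{2k-1}(\vp'\star\pi)(y)\pi(y)\,dy$ into a double integral $\frac12\iint (y^{2k-1}-x^{2k-1})\vp'(y-x)\pi(x)\pi(y)\,dx\,dy$, and applies the mean value theorem to $\vp'(y-x)-\vp'(0)$ together with the sign observation $(y^{2k-1}-x^{2k-1})(y-x)\ge 0$ and the vanishing of odd moments to obtain $I_k \ge \la\, m_{2k}$. You instead move the convexity bound to the convolved drift $\bar\vp':=(\vp\star\pi)'$, use $\bar\vp'' = \vp''\star\pi \ge \la$ together with $\bar\vp'(0)=0$ to get the pointwise inequality $x\,\bar\vp'(x)\ge\la x^2$, and plug that into the single integral directly. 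Both arguments reduce to the same recursion $\la\, m_{2k}\le (2k-1)m_{2k-2}$ through the same integration-by-parts identity $I_k=(2k-1)m_{2k-2}$, and both ultimately rely on the evenness of $\pi$ (you for $\bar\vp'(0)=0$, the paper to kill the cross terms $m_1 m_{2k-1}$ after expanding the symmetrized integrand). Your version is somewhat more elementary — no double integral, no mean value theorem — while the paper's symmetrization is the more standard covariance-type manipulation familiar from log-Sobolev/Poincaré arguments. Both are sound; you have correctly flagged the only subtleties (finiteness of moments and vanishing boundary terms, supplied by the decay in Lemma~\ref{lem2}, and the role of evenness).
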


\begin{proof}
	Set
	\bee
	I_k := \int_{\R} y^{2k-1} (\vp' \star \pi) (y) \pi (y) dy. 
	\eee
	Since $\vp(y)$ is even, we have $\vp'(y)= -\vp'(-y)$, implying
	\bee
	2I_k = \int \int (y^{2k-1}-x^{2k-1}) \vp' (y-x) \pi (x) \pi (y) dx dy.
	\eee
	By the mean value theorem we conclude that
	\bee 
	\vp'(y-x) = \vp'(y-x) - \vp'(0) = (y-x) \vp''(z) 
	\eee 
	for some $z \in (x,y)$, and by the convexity assumption,
	\bee
	2I_k \ge \la \int \int (y^{2k-1}-x^{2k-1})(y-x) \pi (x) \pi (y) dx dy = 2 \lambda m_{2k}.
	\eee
	On the other hand,
	\bee 
	I_k = - \int y^{2k-1} d \pi (y) = (2k-1) \int y^{2(k-1)} \pi (y) d y = (2k-1) m_{2(k-1)}.
	\eee
	We conclude that $m_{2k} \le m_{2(k-1)} (2k-1)/\la$. Induction provides the desired result. 
\end{proof}

\subsection{Proof of Proposition \ref{prop1}}
	Using
	\bee
	|l_{N,T} (y)-l(y)| \le \Big| \frac{\pi'_{N,T}(y)}{\pi_{N,T}(y)} - \frac{\pi'(y)}{\pi_{N,T}(y)} \Big| + \Big| \frac{\pi'(y)}{\pi_{N,T}(y)} - \frac{\pi'(y)}{\pi(y)} \Big|,
	\eee
	on $\pi_{N,T} (y) > \de$, we get
	\begin{align*}
	|l_{N,T} (y)-l(y)| &\le  \de^{-1} |\pi'_{N,T}(y)-\pi'(y)| + \de^{-1} |l(y)| |\pi(y)-\pi_{N,T}(y)| + |l(y)| \1_{\{ \pi_{N,T} (y) \le \de\}}.
	\end{align*}
	Here using $|l(y)| \le 2 \sum_{0 < j \le J_1} j \al_j |y|^{2j-1} + \sum_{J_1 < j \le J} \te_j | \al_j | + \|\be'\|_\infty$
	we get
	\bee
	|l(y)| \lesssim U^{2J_1-1}
	\eee
	for all $|y| \le U$. Since $|(\vp \star \pi)(y)| \le \sum_{0 \le j \le J_1} \al_j y^{2j} + \sum_{J_1 < j \le J} |\al_j| + \| \be \|_\infty$, the chosen $\de$ satisfies
	\bee
	2 \de \le Z_\pi^{-1}  \exp ( - (\vp \star \pi) (y) )  = \pi (y)
	\eee
	for all $|y| \le U$. Hence, it follows that for all $|y| \le U$, 
	\begin{align*}
	\P ( \pi_{N,T} (y) \le \de ) 
	&= \P ( \pi (y) -\pi_{N,T} (y) \ge \pi (y)- \de )\\ 
	&\le \P ( \| \pi -\pi_{N,T} \|_\infty \ge \de ) \le \de^{-2} \E [\| \pi - \pi_{N,T} \|_\infty^2].
	\end{align*}
	Now, with $\pi_{N,T}(y) = (K_{h_0} \star \Pi_{N,T}) (y)$, we have 
	\begin{align*}
	&| (K_{h_0} \star (\Pi_{N,T} - \Pi)) (y) | 
	\le \operatorname{Lip}(K_{h_0} (y - \cdot )) W_1 (\Pi_{N,T}, \Pi)  = h_0^{-1} \operatorname{Lip}(K)  W_1 (\Pi_{N,T}, \Pi),
	\end{align*}
	where
	$\E [W_1^2 (\Pi_{N,T},\Pi)] \le N_T^{-1}$ due to  \eqref{wasser}.
	Furthermore, we have
	\bee
	(K_{h_0} \star \Pi) (y) - \pi (y) = \int K (x) (\pi (y+xh_0) - \pi (y)) dx,
	\eee
	where $\pi \in C^\infty (\R)$ satisfies
	$|\pi^{(n)} (y) | \lesssim (1+|y|^{2J_1-1}) \exp (-\al_{J_1} y^{2J_1})$, $y\in \R$, $n \in \N$, by Lemma~\ref{lem2}.
	Using a Taylor expansion of $\pi$ and that the kernel $K$ is of order $m$, we obtain
	\bee
	(K_{h_0} \star \Pi) (y) - \pi (y) = \int K(x) R_{m-1} (y+xh_0) dx,
	\eee
	where
	\bee
	|R_{m-1}(y+xh_0)| \lesssim |x|^m h_0^m
	\eee 
	and so
	\bee
	|(K_{h_0} \star \Pi)(y) - \pi (y)| \lesssim h_0^m
	\eee
	uniformly in $y \in \R$.
	Similarly, 
	\bee
	| h_1^{-1} ( K'_{h_1} \star \Pi ) (y) - \pi' (y) | \lesssim h_1^m,
	\eee
	because
	\bee
	h_1^{-1} ( K'_{h_1} \star \Pi ) (y)  = ( (K_{h_1})' \star \Pi ) (y) 
	= \int K(x) \pi' (y+xh_1) dx.
	\eee
	We thus deduce that
	\bee
	\E [\| \pi_N - \pi \|_\infty^2] \lesssim N_T^{-1}h_0^{-2} + h_0^{2m}, \qquad \E[ \| \pi'_N - \pi' \|_\infty^2] \lesssim N_T^{-1} h_1^{-4} + h_1^{2m},
	\eee
	where our choice of $h_0, h_1$ yields the optimal rate in upper bounds:
	\bee
	\E[ \| \pi_N - \pi \|^2_\infty] \lesssim N_T^{-\frac{m}{m+1}}, \qquad  \E[ \| \pi'_N - \pi' \|^2_\infty] \lesssim N_T^{-\frac{m}{m+2}}.
	\eee

\subsection{Proof of Proposition \ref{prop2}}
	Recall the definition of the matrix $Q \in \R^{J\times J}$ introduced at \eqref{Qdef}. Since $Q$ is invertible, we deduce that
	\bee
	\E \left[ \| \boldsymbol{\al}^{U}_{N,T} -\boldsymbol{\al}^{U} \|_2^2 \right]^{\frac{1}{2}} 
	\le \int  \E [|l_{N,T}(y)-l(y)-(\beta' \star \pi)(y)|^2 ]^{\frac{1}{2}} \tilde w_U (y) d y
	\eee
	where   $\tilde w (y):= \| Q^{-1} \boldsymbol{l}(y) \|_2 |w(y)|$, $y\in\R$. Moreover, $\|\tilde w\|_{L^1(\R)} < \infty$
	and 
	$\|\tilde w\|_\infty < \infty$ are uniformly bounded in $U$. Hence,
	\bee
	 \E \left[ \| \boldsymbol{\al}^{U}_{N,T} -\boldsymbol{\al}^{U} \|_2^2 \right]^{\frac{1}{2}}
	\lesssim \sup_{|y| \le U}  \E[ |l_{N,T}(y) - l(y)|^2 ]^{\frac{1}{2}} + U^{-1} \int_{\epsilon U}^\infty |(\be' \star \pi) (y)| d y.
	\eee
	As for the last term, we have
	\begin{align*}
	&\int_{\epsilon U}^\infty | (\be' \star \pi) (y) | d y \le \int_{\epsilon U}^\infty \int |\be' (y-x)| \pi (x) d x d y\\
	&\qquad= \Big(\int_{-\infty}^{\epsilon U/2} + \int_{\epsilon U/2}^\infty \Big)
	\Big( \int_{U \epsilon-x}^\infty |\be' (y)| d y \Big) \pi (x) d x \le \int_{\epsilon U/2}^\infty |\be' (y)| dy + \| \be' \|_{L^1(\R)} \int_{\epsilon U/2}^\infty \pi (x) d x.
	\end{align*}
	Finally, note that 
	\bee
	\int_{u}^\infty \pi (x) dx \lesssim \int_u^\infty \pi_1 (x) dx,
	\eee
	where $\pi_1 (x) = \exp(- \al_{J_1} x^{2J_1})$ satisfies $\pi'_1 (x) = -2 J_1 \al_{J_1} x^{2J_1-1} \pi_1 (x)$ and so, 
	\bee
	\int_u^\infty \pi_1 (x) d x \le \frac{1}{u^{2J_1-1}} \int_u^\infty x^{2J_1-1} \pi_1 (x) d x = \frac{\pi_1 (u)}{2 J_1 \al_{J_1} u^{2J_1-1}} .
	\eee
	
	Now we consider the bound for 
	\bee
	\Psi_{N,T} (y) = ( l_{N,T} (y) - l (y,\boldsymbol{\al}_{N,T}) ) \1_{\{|y| \le \epsilon U\}}, \qquad y \in \R.
	\eee
	with
	\bee
	\Psi (y) = l(y) - l (y,\boldsymbol{\al}) = - (\be'\star \pi)(y), \quad y\in \R.
	\eee
	We have 
	\bee
	\int_{|y| \le \epsilon U} \E [|\Psi_{N,T}(y) -\Psi(y)|^2] d y \le \epsilon U\sup_{|y| \le \epsilon U} \E [|\Psi_{N,T}(y)-\Psi(y)|^2],
	\eee
	where
	\bee
	\sup_{|y| \le \epsilon U} \E[ |\Psi_{N,T} (y)-\Psi(y)|^2]
	\lesssim \sup_{|y|\le \epsilon U} \E [|l_{N,T}(y)-l(y)|^2] + \E[ \| \boldsymbol{\al}^{U}_{N,T} - \boldsymbol{\al}^{U} \|^2_2].
	\eee
	Finally, we deduce
	\begin{align*}
	\Big( \int_{|y|>\epsilon U} |\Psi (y)|^2 dy \Big)^{\frac{1}{2}} 
	&\le \int \Big( \int_{|y|>U\epsilon} |\be' (y-x)|^2 dy \Big)^{\frac{1}{2}} \pi (x) d x\\
	&\le \Big( \int_{|y|>\frac{\epsilon}{2}U} |\be' (y)|^2 dy \Big)^{\frac{1}{2}} + \| \be' \|_{L^2(\R)} \int_{|x|>\frac{\epsilon}{2}U}  \pi (x) d x,
	\end{align*}
	which completes the proof of Proposition \ref{prop2}.

\subsection{Proof of Proposition \ref{prop3}}
	Define a function $\be_{N,T}^*$ via the formula
	\bee
	\mathcal{F}(\be^\ast_{N,T}) (z) = \mathcal{F}(\beta') (z) \1_{\{|\mathcal{F}(\Pi_{N,T})(z)|>\om\}}.
	\eee
	Write $\mathcal{F}(\Pi) = \mathcal{F}(\pi)$. Use the identity
	\begin{align*}
	&\mathcal{F}(\be'_{N,T}-\be^\ast_{N,T}) (z)\\  
	&\qquad= (- \mathcal{F}(\Psi_{N,T}-\Psi) (z) + \mathcal{F}(\be') (z) \mathcal{F} (\Pi-\Pi_{N,T})(z) ) \frac{\overline{\mathcal{F}(\Pi_{N,T}) (z)} }{|\mathcal{F}(\Pi_{N,T}) (z)|^2} \1_{\{|\mathcal{F}(\Pi_{N,T})(z)|>\om\}},
	\end{align*}
	where by the Kantorovich--Rubinstein dual formulation, we have
	\bee
	|\mathcal{F}(\Pi-\Pi_{N,T}) (z)| \le |z| W_1 (\Pi_{N,T},\Pi).
	\eee
	As a result,
	\begin{align*}
	 \E \left[\int_{\R} | \be'_{N,T}(y)- \be^\ast_{N,T} (y) |^2 d y \right]^{1/2} &\le \om^{-1}   \E \left[  \int_{\R} | \Psi_{N,T} (y) - \Psi (y)|^2 d y \right]^{1/2}\\ 
	&\qquad+ \om^{-1} \Big( \E [W_1^2 (\Pi_{N,T}, \Pi)] \int_{\R} |\be'' (y)|^2 d y \Big)^{1/2}.
	\end{align*}
	Furthermore, it holds that
	\begin{align*}
	&\E \left[ \int_{\R} |\be^*_{N,T} (y) - \be' (y) |^2 d y \right]\\ 
	&\qquad= \frac{1}{2 \pi} \Big( \int_{|\mathcal{F}(\pi) (z)| > 2\om} + \int_{|\mathcal{F}(\pi) (z)| \le 2 \om} \Big) |\mathcal{F}(\be') (z)|^2 \P ( |\mathcal{F}(\Pi_{N,T}) (z)| \le \om ) d z.
	\end{align*}
	If $\om<|\mathcal{F}(\pi) (z)|/2$, then we have by the Markov and Jensen's inequalities
	\begin{align*}
	\P ( |\mathcal{F}(\Pi_{N,T}) (z) | \le \om ) &\le \P (|\mathcal{F}(\Pi_{N,T}-\Pi) (z)| \ge |\mathcal{F}(\pi) (z)| - \om  )\\ 
	&\le \frac{\E[ |\mathcal{F}(\Pi_{N,T}-\Pi) (z)|^2]}{(|\mathcal{F}(\pi)(z)| - \om)^2}.
	\end{align*}
	Consequently, we obtain
	\begin{align*}
	\E \left[ \int_{\R} |\beta^*_{N,T} (y) - \beta' (y) |^2 d y \right] &\le  \frac{1}{2 \pi } \Big( \int_{|\mathcal{F}(\pi) (z)| > 2\om}  |\mathcal{F}(\beta') (z)|^2 \frac{\E [W_1^2(\Pi_{N,T},\Pi)]|z|^2 }{|\mathcal{F}(\pi) (z)|^2/4} d z\\ 
	&\qquad
	+ \int_{|\mathcal{F}(\pi) (z)| \le 2 \om}  |\mathcal{F}(\be') (z)|^2 d z\Big),
	\end{align*}
	which completes the proof of Proposition \ref{prop3}. 

\subsection{Proof of Proposition \ref{prop:rate-entire}}
		We use Proposition \ref{prop2}. It suffices to show that on the r.h.s.\ of \eqref{int:betaMISE} the first term dominates. For this purpose, we decompose each of the last two terms into two integrals over $|z| \le \vt$ and $|z| > \vt$, respectively. For all $|z| \le \vt$, it holds $|\mathcal{F}(\pi) (z)| \ge 1/2$. Indeed, since $\pi$ is an even density, we have $\mathcal{F}(\pi) (z) = 1 + \int (\exp (\mathrm{i} zy)-1 - \mathrm{i} zy) \pi (y) d y$. Using $|\exp (\mathrm{i} x)-1- \mathrm{i} x| \le |x|^2/2$, $x \in \R$, we get
		$|\mathcal{F}(\pi) (z)| \ge 1-  m_2 z^2/2$. By Lemma \ref{mom}, we have 
		$m_2 \le 1/\la \le 1/\vt^2$.
		On the other hand, the Paley--Wiener theorem implies that $\mathcal{F}(\be') (z)$ vanishes for $|z| > \vt$. Finally, it remains
		\bee
		\E\left[\int_{\R} | \be'_{N,T}(y)-\be' (y) |^2 d y \right]  \lesssim \om^{-2}  \Big(  \E\left[\int_{\R} | \Psi_{N,T}(y)-\Psi(y) |^2 d y\right]+ 1/N_T \Big) +\vt^2/N_T,
		\eee
		where Corollary \ref{cor1} provides an upper bound on the dominating term.

\subsection{Proof of Theorem \ref{minimax}}
We will use the classical two hypotheses approach presented in the monograph \cite{T09}. 
More specifically,
we will find functions $\be_0,\be_1 \in \f_{p, C, C_0, \boldsymbol{a},\la}$
such that 
\[
\|\be'_0-\be'_1\|_{L^2(\R)}^2 = \operatorname{const} \cdot (\log N)^{-p/J} \quad \text{and} \quad
K (\Pi_{\beta_1}^{\otimes N}, \Pi_{\beta_0}^{\otimes N})  \lesssim 1,
\]
where $K (\Pi_{\beta_1}^{\otimes N}, \Pi_{\beta_0}^{\otimes N})$ denotes the 
Kullback-Leibler divergence.
 
We start with some preliminary estimates.
Let $\be \in \f_{p, C, C_0, \boldsymbol{a},\la}$. Due to the inequality $(\log \pi_\beta)''(y)\leq -2a-(\beta''\star\pi_{\be})(y) \leq -\lambda$
	the  probability measure $\Pi_\beta$ satisfies the logarithmic Sobolev inequality with constant $2/\lambda$: 
	$$
	\operatorname{Ent}_{\Pi_\beta}(f^2) \le \frac{2}{\lambda} \int_{\R} |f'(y)|^2  \Pi_\beta(dy),
	$$
	where 
	$$
	\operatorname{Ent}_{\Pi_\beta} (f^2) := \int_{\R} f^2(y) \log f^2(y)  \Pi_\beta(dy) - \int_{\R} f^2(y) \Pi_\beta(dy) \log \Big(\int_{\R} f^2(y) \Pi_\beta(dy) \Big), 
	$$
	for every smooth function $f : \R \to \R$ with $\int_{\R} |f'(y)|^2  \Pi_\beta(dy)<\infty$. Hence, for any $\be_0,\be_1 \in \f_{p, C, C_0, \boldsymbol{a},\la}$, we can bound the Kullback-Leibler divergence as
	\bee \label{KLineq}
	K(\Pi_{\be_1}, \Pi_{\be_0}) := \int_{\R} \pi_{\be_1}(y) \log \frac{\pi_{\be_1}(y)}{\pi_{\be_0}(y)} dy \leq \frac{1}{2\la}  \int_{\R} \pi_{\be_1}(y)|g(y)|^2 dy
	\eee 
	with a function
	\bee
	g = \frac{\pi'_{\be_1}}{\pi_{\be_1}} - \frac{\pi'_{\be_0}}{\pi_{\be_0}}= \Big( \Big( \sum_{1 \le j \le J} a_j \vp_j  + \be_0 \Big)' \star \pi_{\be_0}  \Big) - \Big( \Big( \sum_{1 \le j \le J} a_j \vp_j  + \be_1 \Big)' \star \pi_{\be_1} \Big).
	\eee
	We further decompose it as $g = \sum_{1<j\le J} a_j g_j + g_1 + g_0$ with
	\bee\label{def:g}
	g_j :=  \vp'_j \star (\pi_{\be_0}- \pi_{\be_1}), \ 1 < j \le J, \qquad g_1 := \be'_1 \star (\pi_{\be_0}- \pi_{\be_1}), \qquad g_0 := (\be'_{0} - \be'_{1}) \star \pi_{\be_0}.
	\eee
	In general, it seems hard to assess the functions $g_j$ for $j\geq 1$ directly (in contrast to $g_0$). Instead, we will show  that
	\bee \label{usefulin}
	\frac{1}{2\la}  \int_{\R} \pi_{\be_1}(y)|g(y)-g_0(y)|^2 dy \leq \ga^2 K(\Pi_{\be_1}, \Pi_{\be_0})
	\eee
	for some $\ga\in (0,1)$, and as a consequence of the inequality \eqref{KLineq} we deduce
	that
	\[
	K(\Pi_{\be_1}, \Pi_{\be_0}) \lesssim \int_{\R} \pi_{\be_1}(y)|g_0(y)|^2 dy.
	\]
	The latter bound will be estimated directly for a proper choice of functions $\be_0,\be_1$.
	
	We proceed with showing \eqref{usefulin}. We will find a constant $\ga_j > 0$ such that
	\bee\label{ineq:intg}
	\frac{1}{2\la} \int_{\R} \pi_{\be_1} (y) |g_j(y)|^2 dy \le \ga^2_j K(\Pi_{\beta_1},\Pi_{\beta_0}), \qquad1 \le j \le J.
	\eee
	Since
	$\|g_1\|_\infty  \le \| \be'_1 \|_\infty \|\pi_{\be_0}-\pi_{\be_1} \|_{L^1(\R)}$ and $\|\pi_{\be_0}-\pi_{\be_1} \|_{L^1(\R)} \le \sqrt{2 K(\Pi_{\be_1},\Pi_{\be_0})}$,
	we have 
	\bee 
	\ga^2_1 = \frac{\|\be'_1\|_\infty^2}{\la}.
	\eee 
	For $1 < j \le J$, we have
	\bee 
	|g_j (y)|^2 \le 
	(2j)^2 \int_{\R} (y-x)^{2(2j-1)} \Big( \sqrt{\pi_{\be_1}(x)}+\sqrt{\pi_{\be_0}(x)} \Big)^2 d x \cdot 
	\| \sqrt{\pi_{\be_1}} - \sqrt{\pi_{\be_0}} \|_{L^2(\R)}^2
	\eee
	where $\|\sqrt{\pi_{\be_1}} - \sqrt{\pi_{\be_0}} \|_{L^2(\R)}^2 \le K(\Pi_{\be_1}, \Pi_{\be_0})$ and 
	$( \sqrt{\pi_{\be_1}} +\sqrt{\pi_{\be_0}})^2 \le 2( \pi_{\be_1} + \pi_{\be_0})$. Futhermore, 
	\bee
	\int_{\R} \pi_{\be_1} (y) \int_{\R} (y-x)^{2k} \pi_{\be_i} (x) dx dy
	= \sum_{j=0}^{k} \binom{2k}{2j} m_{2j}^{\pi_{\be_1}} m_{2(k-j)}^{\pi_{\be_i}} \le \frac{C_k}{\la^k},
	\eee
	by Lemma \ref{mom} with $m_j^{\pi_{\be_i}} = \int_{\R} y^j \pi_{\be_i} (y) dy$, $i=0,1$, and
	\bee
	C_k 
	= \sum_{j=0}^k \binom{2k}{2j} (2j - 1)!! (2(k-j) - 1)!!
	= \frac{(2k)!}{k! 2^k} \sum_{j=0}^k \binom{k}{j} = \frac{(2k)!}{k!}.
	\eee
	Hence, \eqref{ineq:intg} holds true with
	\bee
	\ga_j^2 = \frac{2 (2j)^2 C_{2j-1}}{\la^{2j}}
	\qquad 1 < j \le J.
	\eee 
	We conclude that 
	\begin{align*}
	(K (\Pi_{\beta_1}, \Pi_{\beta_0}))^{1/2} \le \Big(\sum_{1<j \le J} a_j \ga_j + \ga_1 \Big) (K (\Pi_{\be_1}, \Pi_{\be_0}))^{1/2} +  \Big( \frac{1}{2\la}\int_{\R} |g_0 (y)|^2 \pi_{\beta_1}(y) dy\Big)^{1/2}.
	\end{align*}
	Consequently, we deduce the inequality 
	\bee \label{firstine}
	K (\Pi_{\beta_1}, \Pi_{\beta_0}) \le \frac{1}{2\la(1 - \sum_{1<j \le J} a_j \ga_j - \ga_1)^2} \int_{\R} |g_0 (y)|^2 \pi_{\beta_1}(y)d y.
	\eee
	Due to \eqref{firstine} we only need to handle the last term in \eqref{def:g}. For this purpose we use following construction: We introduce the constants $\rho>0$, $M>0$ ($\rho\to 0$ and $M\to \infty$ to be chosen later) and a function $\phi\in C^{2}(\R) $ with 
	\[
	\text{supp}(\phi)=[-2,-1] \cup [1,2],
	\]
	and set
	\bee
	\be_0(y)=f_0(y), \qquad \be_1(y)=f_0(y) + \rho M \phi(y/M). 
	\eee
	Here $f_0 \in \f_{p, C/4, C_0/2, (a_1/2, \dots, a_J/2^J),\la/2} \subset \f_{p, C, C_0, \boldsymbol{a},\la}$. 
	To ensure
	that $\be_1 \in \f_{p, C, C_0, \boldsymbol{a},\la}$ we assume
	\begin{align}\label{condi1}
	\rho M\|\phi\|_{\infty} \le C_0/2, \quad &\rho \|\phi'\|_{\infty} \le C_1- C_1/2^{1/2}, \quad (\rho/M) \|\phi''\|_{\infty} \le C_2/2,\\
	\label{condi2} &\rho^2 M^{2p+1} \int_{\R} y^{2p} |\phi'(y)|^2 dy \le C/4.
	\end{align}
	In particular, condition \eqref{condi2} allows us to choose and later use $\rho = c M^{-p-1/2}$ for some $c>0$.
	Furthermore, the condition $p>1/2$ is required to ensure that  $\rho M=O(1)$.
	We obviously  have that
	\bee
	\|\be'_0-\be'_1\|_{L^2(\R)}^2 =\rho^2M \|\phi'\|_{L^2(\R)}^2.
	\eee
	Next, we will bound the right hand side of \eqref{firstine}, where recall \bee
	g_0 (y) = ((\be'_1-\be'_0) \star \pi_{\be_0}) (y).
	\eee 
	For this purpose we note that $g_0(y) = - g_0 (y)$ and then decompose $\int_{\R} =2 (\int_0^k + \int_k^\infty)$, where the threshold $k<M$ will be chosen later. Since $Z_{\pi_{\be_1}} \geq \int_{\R} \exp(- \sum_{j=0}^J \al_j y^{2j} - \|\be_1\|_{\infty}) dy$ and $\| g_0 \|_\infty \le \| \be'_0 - \be'_1 \|_\infty$ we deduce that 
	\bee
	\int_k^\infty |g_0(y)|^2 \pi_{\be_1}(y) dy \lesssim \rho^2
	\int_k^\infty \exp(-a_J y^{2J}) dy  \lesssim \rho^2 \frac{\exp(-a_J k^{2J})}{k^{2J-1}}. 
	\eee
	On the other hand, we get 
	\[
	\int_0^k |g_0(y)|^2 \pi_{\be_1}(y) dy \lesssim \rho^2 k \sup_{y \in [0,k]} | (\phi'(\cdot/M) \star \pi_{\be_0})(y) |^2
	\] 
	and
	\[
	|(\phi'(\cdot/M) \star \pi_{\be_0})(y)| \lesssim \int_{-\infty}^{k-M} \pi_{\be_0}(y) dy \lesssim \frac{\exp(-a_J (M-k)^{2J})}{(M-k)^{2J-1}}.
	\]
	Consequently, choosing $k=M/2$ we deduce from  \eqref{firstine} that 
	\bee
	K (\Pi_{\beta_1}, \Pi_{\beta_0}) \lesssim \exp(-a_J (M/2)^{2J})
	\eee
	(recall that $\rho = O( M^{-p-1/2})$). Now, choosing $M= 2 ((\log N)/a_J)^{1/(2J)}$ we finally obtain that 
	\[
	K (\Pi_{\beta_1}^{\otimes N}, \Pi_{\beta_0}^{\otimes N}) = N K (\Pi_{\beta_1}, \Pi_{\beta_0}) \lesssim 1.
	\]
	On the other hand, since $\rho= c M^{-p-1/2}$ we deduce that 
	\[
	\|\be'_0-\be'_1\|_{L^2(\R)}^2 = 
	\operatorname{const} \cdot (\log N)^{-p/J},
	\]
	which by \cite[(2.9) and Theorem 2.2(iii)]{T09} completes the proof of our Theorem  \ref{minimax}.

\bibliographystyle{plain}
\bibliography{gran-part}

\end{document}